\documentclass[12pt,leqno]{amsart}

\usepackage{amsmath,amsfonts,amssymb,amsthm,mathrsfs}
\usepackage{curves}  
\usepackage{epic}  
\usepackage[all]{xy}   
\usepackage{graphicx}
\usepackage{epsfig}

\usepackage[usenames]{color}

\setlength{\textwidth}{16.4cm}
\setlength{\textheight}{20.92cm}
\setlength{\hoffset}{-1.9cm}
%
%


\newtheorem{thm}{Theorem}[section]
\newtheorem{prop}[thm]{Proposition}

\newtheorem{cor}[thm]{Corollary}
\newtheorem{lem}[thm]{Lemma}

\newtheorem{rem}[thm]{Remark}
\newtheorem{defn}[thm]{Definition}

\DeclareMathOperator{\diam}{diam}


\newcommand{\Z}{{\mathbb Z}}      \newcommand{\R}{{\mathbb R}}

\def\dist{\qopname\relax o{dist}}
\def\b{\qopname\relax o{b}}

\title[Fractional inequalities in unbounded John domains]{Fractional Sobolev--Poincar\'e and fractional Hardy \\ inequalities in unbounded John domains}

\author{Ritva Hurri-Syrj\"anen and Antti V. V\"ah\"akangas}
\address{Department of Mathematics and Statistics, 
Gustaf H\"allstr\"omin katu 2$\b$, FI-00014 University of Helsinki, Finland}
\email{ritva.hurri-syrjanen@helsinki.fi}
\email{antti.vahakangas@helsinki.fi}
\date{\today}

\begin{document}

\keywords{Fractional Sobolev--Poincar\'e inequality, fractional Sobolev inequality, fractional Hardy inequality, unbounded John domain}
\subjclass[2010]{26D10 (46E35)}

\begin{abstract}
We prove fractional Sobolev--Poincar\'e inequalities 
in unbounded John domains
and we characterize  fractional Hardy inequalities there.
\end{abstract}

\maketitle

\markboth{\textsc{R. Hurri-Syrj\"anen and A. V. V\"ah\"akangas}}
{\textsc{Fractional Sobolev--Poincar\'e  and fractional Hardy inequalities}}

\section{Introduction}

Let $D$ be a bounded $c$-John domain in $\R^n$, $n\ge 2$. Let numbers
$\delta,\tau \in (0,1)$ and exponents $p,q\in [1,\infty)$ be given such that $1/p-1/q = \delta/n$.
Then there is a  constant $C=C(\delta,\tau,p,n,c)$ such that the fractional Sobolev--Poincar\'e 
inequality 
\begin{equation}\label{fractionalqp}
\int_D\vert u(x)-u_D\vert ^q\,dx
\le
C
\biggl(\int_D\int_{B^n(x,\tau \dist(x,\partial D))}\frac{\vert u(x)-u(y)\vert ^p}{\vert x-y\vert ^{n+\delta p}}\,dy\,dx
\biggr)^{q/p}
\end{equation}
holds for all functions $u \in L^1(D)$.
For a proof we refer the reader to \cite[Theorem 4.10]{H-SV} when $1<p<n/\delta$ and to \cite{Dyda3} when $p=1$.

We prove the inequality corresponding   to
\eqref{fractionalqp} in unbounded
John domains,
Theorem 
\ref{t.main}.
The classical Sobolev--Poincar\'e inequality for an unbounded $c$-John domain $D$ has been proved in
\cite[Theorem 4.1]{MR1190332}: 
there is a finite constant $C(n,p,c)$ such that the inequality
\[
\inf_{a\in\R}Ê\int_D \lvert u(x)-a\rvert^{np/(n-p)}\,dx \le C(n,p,c)\bigg(\int_D \lvert \nabla u(x)\rvert^p\,dx\bigg)^{n/(n-p)}
\]
holds for all
$u\in L^1_p(D)=\{u\in\mathscr{D}'(D)\,:\,\nabla u\in L^p(D)\}$;
here $1\le p<n$.
We obtain the fractional Sobolev inequalities
\eqref{frac_Sobolev_inequality}
in unbounded John domains too, Theorem \ref{t.main_emb}.

As an application of the fractional Sobolev inequalities 
we characterize the
fractional Hardy inequalities 
\[
\int_{D} \frac{\lvert u(x)\rvert^q}{\dist(x,\partial D)^{q(\delta+n(1/q-1/p))}}\,dx
\le  C\bigg(\int_{D} \int_{D}
\frac{\lvert u(x)-u(y)\rvert ^p}{\lvert x-y\rvert ^{n+\delta p}}\,dy\,dx\bigg)^{q/p}
\]
in unbounded John domains $D$
whenever $\delta\in (0,1)$ and exponents $p,q\in [1,\infty)$ are given such that $p<n/\delta$ and $0\le 1/p-1/q\le \delta/n$ and the constant $C$ does not depend on  $u\in C_0(D)$, Theorem \ref{t.hardy_c}.
We also give sufficient geometric conditions for the fractional Hardy inequalities in Corollary \ref{t.cor}.

\subsection*{Acknowledgements}

The authors thank Professor Vladimir Maz'ya for bringing up the question about fractional inequalities in unbounded domains.

\section{Notation and preliminaries}\label{s.notation}

Throughout the paper we assume that $D$ is a  domain and $G$ is an open set in the Euclidean $n$-space
$\R^n$, $n\geq 2$. 
The open ball centered at $x\in \R^n$ and with radius $r>0$ is  $B^n(x,r)$.
The Euclidean
distance from $x\in G$ to the boundary of $G$ is written as $\dist(x,\partial G)$.
The diameter of a set $A$ in $\R^n$ is $\mathrm{diam}(A)$.
The Lebesgue $n$-measure of a  measurable set $A$ is denoted by $\vert A\vert.$
For a measurable set $A$ with finite and positive measure and for an integrable function $u$ on $A$ the integral average is written as
\[
u_A=\frac{1}{\lvert A \rvert} \int_{A}u(x)\,dx\,.
\]
We write $\chi_A$ for the characteristic function of a set $A$.
For a proper open set $G$ in $\R^n$ we fix a Whitney decomposition $\mathcal{W}(G)$. 
The construction and the properties of Whitney cubes can be found in 
\cite[VI 1]{S}.
The family $C_0(G)$ consists of all continuous functions $u:G\to \R$ with a compact support in $G$. 
We let $C(\ast,\dotsb,\ast)$  denote a constant which depends on the quantities appearing
in the parentheses only.

We define the $c$-John domains so that unbounded domains are allowed, too. For other equivalent definitions we refer the reader to \cite{MR1246886} and \cite{MR1190332}.

\begin{defn}\label{sjohn}
A domain $D$ in $\R^n$ with $n\ge 2$ is a {\em $c$-John domain}, $c\ge 1$, if
each pair of points $x_1,x_2\in D$ can be joined
by a rectifiable curve $\gamma:[0,\ell]\to D$ parametrized by its arc length such that
  $\dist(\gamma(t),\partial D)\ge \min\{t,\ell -t\}/c$ for every $t\in [0,\ell]$.
\end{defn}

\noindent
Examples of unbounded John domains are the Euclidean 
$n$-space $\R^n$ and the infinite cone
\[
\big\{(x',x_n)\in\R^n \,:\,  x_n>\lVert x' \rVert\big\}\,.
\]
For more examples we refer the reader to 
\cite[4.3 Examples]{MR1190332}.

We recall a useful property of bounded John domains
from \cite[Theorem 3.6]{MR1246886}.

\begin{lem}\label{t.equi}
 Let $D$ in $\R^n$ be a bounded $c$-John domain, $n\ge 2$. 
 Then there exists a central point $x_0\in D$  such that every
  point $x$ in $D$ can be joined to $x_0$ by a rectifiable curve
  $\gamma:[0,\ell]\to D$, parametrized by its arc length, with
  $\gamma(0)=x$, $\gamma(\ell)=x_0$, and 
$\dist(\gamma(t),\partial D)\ge t/4c^2$ for each $t\in [0,\ell]$.
\end{lem}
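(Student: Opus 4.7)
The plan is to choose $x_0$ as a ``deepest'' point of $D$, invoke the John condition to produce a curve from an arbitrary $x$ to this $x_0$, and then strengthen the lower bound on $\dist(\gamma(t),\partial D)$ in the second half of the curve by combining the John estimate with a triangle-inequality estimate that exploits how large $\dist(x_0,\partial D)$ is.

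First, I would set $R=\sup_{y\in D}\dist(y,\partial D)$. Since $D$ is bounded, $\overline{D}$ is compact and the function $\dist(\cdot,\partial D)$ extends continuously to zero on $\partial D$, so the supremum is attained at some interior point $x_0\in D$; this is the candidate central point. For an arbitrary $x\in D$, Definition \ref{sjohn} supplies a rectifiable arc-length curve $\gamma:[0,\ell]\to D$ with $\gamma(0)=x$, $\gamma(\ell)=x_0$, satisfying $\dist(\gamma(t),\partial D)\ge \min\{t,\ell-t\}/c$. Applying this at the midpoint gives $\ell/(2c)\le \dist(\gamma(\ell/2),\partial D)\le R$, hence the crucial length bound
\[
\ell\le 2cR.
\]

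With that bound in hand, I would split into two regimes. On $[0,\ell/2]$ the original John condition already yields $\dist(\gamma(t),\partial D)\ge t/c\ge t/(4c^2)$ since $c\ge 1$. On $[\ell/2,\ell]$, write $s=\ell-t\in[0,\ell/2]$. If $s>R/2$, the John condition still gives $\dist(\gamma(t),\partial D)\ge s/c>R/(2c)$, and since $t\le \ell\le 2cR$ we have $t/(4c^2)\le R/(2c)$, so the required inequality holds. If instead $s\le R/2$, then $\gamma(t)$ is close to $x_0$ and the triangle inequality produces
\[
\dist(\gamma(t),\partial D)\ge \dist(x_0,\partial D)-\lvert \gamma(t)-x_0\rvert\ge R-s\ge R/2\ge t/(4c^2),
\]
where the last step again uses $t\le 2cR$. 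Combining the cases yields the desired estimate $\dist(\gamma(t),\partial D)\ge t/(4c^2)$ for every $t\in[0,\ell]$.

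The main obstacle is the length bound $\ell\le 2cR$: without it the second-half estimate would break down near $t=\ell$, because the raw John lower bound $(\ell-t)/c$ collapses there. Choosing $x_0$ to maximize $\dist(\cdot,\partial D)$ is what allows the midpoint inequality to be converted into this uniform control on $\ell$, after which the argument is a mechanical case split. One should also note that the statement only claims the existence of \emph{some} such $x_0$, so no uniqueness issue arises if the maximum is attained at several points.
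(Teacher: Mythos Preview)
Your argument is correct. The paper does not actually prove this lemma; it simply cites it as \cite[Theorem 3.6]{MR1246886} (V\"ais\"al\"a). What you have written is essentially the standard proof of the equivalence between the two-point John condition of Definition~\ref{sjohn} and the centered (``carrot'') formulation: pick $x_0$ maximizing $\dist(\cdot,\partial D)$, use the midpoint to bound the total length by $2cR$, and then on the second half replace the collapsing estimate $(\ell-t)/c$ by the constant lower bound coming from the triangle inequality near $x_0$. All steps check out, including the case split at $s=R/2$ and the use of $c\ge 1$ to absorb constants. One cosmetic remark: when you say $\dist(\cdot,\partial D)$ attains its supremum, it is enough that this function is continuous on $\overline{D}$ and vanishes on $\partial D$, which uses that a bounded domain in $\R^n$ with $n\ge 2$ has nonempty boundary; you noted this implicitly.
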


The following engulfing property is in \cite[Theorem 4.6]{MR1246886}.

\begin{lem}\label{t.engulfing}
A $c$-John domain $D$ in $\R^n$ can be written as the union
of domains $D_1,D_2,\ldots$ such that
\begin{itemize}
\item[(1)]Ê$\overline{D_i}$ is compact in $D_{i+1}$ for each $i=1,2,\ldots$,
\item[(2)]Ê$D_i$ is a $c_1$-John domain for each $i=1,2,\ldots$ with $c_1=c_1(c,n)$.
\end{itemize}
\end{lem}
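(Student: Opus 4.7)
The plan is to build $D_i$ as bounded open neighborhoods, inside $D$, of the $c$-John curves emanating from a fixed basepoint $x_0\in D$ whose length and minimal boundary distance are controlled by $i$. Fix $x_0\in D$ and $r_0\in (0,1)$ with $\overline{B^n(x_0,r_0)}\subset D$. For each integer $i$ with $1/i<r_0$, let $\Gamma_i$ denote the collection of rectifiable curves $\gamma:[0,\ell]\to D$, parametrized by arc length, with $\gamma(0)=x_0$, $\ell\le i$, $\dist(\gamma(t),\partial D)\ge \min\{t,\ell-t\}/c$ for all $t\in [0,\ell]$, and with endpoint satisfying $\lvert \gamma(\ell)-x_0\rvert<i$ and $\dist(\gamma(\ell),\partial D)>1/i$. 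Around each such $\gamma$ form the narrow tube $T(\gamma)=\bigcup_{t\in [0,\ell]} B^n(\gamma(t),\dist(\gamma(t),\partial D)/(4c))\subset D$, and define $D_i$ as the connected component of the open set $\bigcup_{\gamma\in\Gamma_i}T(\gamma)$ that contains $x_0$.

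Each $D_i$ is then an open, connected, bounded subset of $D$ that stays at positive distance from $\partial D$, and the equality $D=\bigcup_i D_i$ follows from applying Definition \ref{sjohn} to each pair $(x_0,x)$ with $x\in D$: the resulting curve lies in some $\Gamma_i$ for $i$ sufficiently large. The compact containment $\overline{D_i}\subset D_{i+1}$ follows from the strict inequalities in the definition of $\Gamma_i$ combined with the lower bound on tube widths; replacing, e.g., $i$ by $2i$ at the next stage provides ample buffer.

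The core step is the John property of $D_i$ with constant $c_1=c_1(c,n)$. Given $y_1,y_2\in D_i$, select $\gamma_j\in\Gamma_i$ and times $t_j$ so that $y_j\in B^n(\gamma_j(t_j),\dist(\gamma_j(t_j),\partial D)/(4c))$, and construct a candidate path from $y_1$ to $y_2$ by concatenating a straight segment from $y_1$ to $\gamma_1(t_1)$, the reverse of $\gamma_1$ restricted to $[0,t_1]$, the forward of $\gamma_2$ restricted to $[0,t_2]$, and a final straight segment to $y_2$. The lower bound $\dist(\gamma_j(s),\partial D)\ge \min\{s,\ell_j-s\}/c$ along each $\gamma_j$ and the uniform estimate $\dist(x_0,\partial D)\ge r_0$ yield, after reparametrization by arc length, a John-type inequality for the concatenated curve with constant $c_1$ depending only on $c$ and the tube factor.

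The main obstacle is that a naive concatenation of two John curves at a shared endpoint does not preserve the John constant; what saves us is that $x_0$ is a fixed interior point of every $D_i$ with $\dist(x_0,\partial D_i)$ bounded below uniformly by $r_0/(4c)$. Propagating this lower bound across the concatenation at $x_0$, with careful bookkeeping of the constants near the switch point, yields a John constant $c_1$ depending only on $c$ and $n$, as worked out in \cite[Theorem 4.6]{MR1246886}.
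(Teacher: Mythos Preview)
The paper does not prove this lemma at all; it simply quotes \cite[Theorem 4.6]{MR1246886}. Your proposal ultimately does the same, since at the decisive step you defer to that reference. The heuristic sketch you offer before the citation, however, contains a genuine gap, and it is not merely a matter of ``careful bookkeeping''.

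The problem is your claim that the concatenated curve through the fixed basepoint $x_0$ yields a John constant $c_1$ depending only on $c$ and $n$. It does not. Take $D$ to be the upper half-plane with $x_0=(0,1)$, and pick $y_1,y_2\in D_i$ at arclength of order $i$ from $x_0$ along their respective tubes (such points exist once $i$ is large). Your concatenated curve then has length of order $i$ and passes through $x_0$ near its midpoint, so the John condition at $x_0$ forces
\[
\dist(x_0,\partial D_i)\;\ge\; \frac{\min\{t_1,t_2\}}{c_1}\;\sim\;\frac{i}{c_1}\,.
\]
But the only lower bound your construction provides is $\dist(x_0,\partial D_i)\ge \dist(x_0,\partial D)/(4c)$, a quantity of order $r_0$ that does not grow with $i$. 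Hence $c_1$ must be at least of order $i/r_0$, depending both on $i$ and on the particular domain through $r_0$. The ``uniform estimate $\dist(x_0,\partial D)\ge r_0$'' that you invoke to bridge the two halves is precisely at the wrong scale.

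Your sets $D_i$ may still be uniformly John---in the half-plane example they are roughly rectangles of aspect ratio comparable to $1$---but the verifying curves cannot be forced through the fixed point $x_0$; for far-apart $y_1,y_2$ the correct John arc in $D_i$ must rise to height comparable to $\lvert y_1-y_2\rvert$ and will typically miss $x_0$ entirely. Arranging this is the actual content of V\"ais\"al\"a's argument, and it requires a different mechanism (carrot-type neighborhoods and the inner-diameter metric) rather than the basepoint concatenation you describe.
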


We define the upper and lower Assouad dimension of a
given set $E\not=\emptyset$ in $\R^n$.
The upper Assouad dimension measures how thin a given set is and the
lower Assouad dimension measures its fatness.
For further
discussion on these dimensions we refer to \cite[\S1]{KLV}.

\begin{defn}\label{def:uAssouad}
The upper Assouad dimension
of $E$, written as $\overline{\mathrm{dim}}_A(E)$,
is defined as the infimum of all numbers $\lambda\ge 0$ as follows:
There exists a constant $C=C(E,\lambda)> 0$ 
such that for every
$x\in E$ and for all $0<r<R<2\mathrm{diam}(E)$ the set 
$E\cap B^n(x,R)$  
can be covered by 
at most $C(R/r)^\lambda$ balls that are centered in $E$ and have radius $r$. 
\end{defn}

\begin{defn}
The lower Assouad dimension of $E$,
written as $\underline{\mathrm{dim}}_{A}(E)$, 
is defined as the supremum of all numbers  $\lambda\ge 0$ as follows:
There exists a constant $C=C(E,\lambda)> 0$ such that  for every $x\in E$ and  for all $0<r<R<2\diam(E)$ 
at least $C(R/r)^\lambda$ balls centered in $E$ and with radius $r$ are needed to cover the set $B^n(x,R)\cap E$. 
\end{defn}

Let $G$ be an open set in $\R^n$. Let $0< p<\infty$ and $0<\tau ,\delta<1$ be given. We write
\[
\lvert u \rvert_{W^{\delta,p}(G)} = \bigg( \int_G\int_{G}\frac{\lvert u(x)-u(y)\rvert^p}{\lvert x-y\rvert^{n+\delta p}}\,
dy\,dx\,\bigg)^{1/p}
\]
and
\[
\lvert u \rvert_{W^{\delta,p}_\tau(G)} = \bigg( \int_G\int_{B^n(x,\tau \mathrm{dist}(x,\partial G))}\frac{\lvert u(x)-u(y)\rvert^p}{\lvert x-y\rvert^{n+\delta p}}\,
dy\,dx\,\bigg)^{1/p}
\]
for appropriate measurable  functions $u$ on $G$.
When $G=\R^n$ both of the integrals in the latter form are taken over the whole space.
The homogeneous fractional Sobolev space
$\dot{W}^{\delta,p}_\tau(G)$ consists
of all measurable functions $u:G\to \R$  
with $\lvert u\rvert_{W^{\delta,p}_\tau(G)}<\infty$. 

The following lemma tells that the functions
$u\in \dot{W}^{\delta,p}_\tau(G)$
are locally $L^p$-integrable in $G$, that is 
$u\in L^p_{\textup{loc}}(G)$. We improve this
for John domains in Corollary \ref{embed}.

\begin{lem}\label{l.integrability}
Suppose that $G$ is an open set in $\R^n$. Let $0< p<\infty$ and $0<\tau,\delta<1$ be given.
Let $K$ be a compact set in $G$.  
If $u\in \dot{W}^{\delta,p}_\tau(G)$ then $u\in L^p(K)$. 
\end{lem}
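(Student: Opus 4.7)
The plan is to reduce the claim to a local statement on small balls and conclude by a Fubini/pigeonhole argument, thereby bypassing any appeal to a fractional Poincar\'e inequality.

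First, I would show that for every $z\in G$ there is a ball $B=B^n(z,\rho)\subset G$ on which $u\in L^p(B)$; since $K$ is compact in $G$, finitely many such balls then cover $K$ and yield $u\in L^p(K)$. Concretely, I set $\rho := \tau\dist(z,\partial G)/4$, so that $B\subset G$, and check that for every $x\in B$,
\[
\tau\dist(x,\partial G)\ge \tau(\dist(z,\partial G)-\rho)=\tau(1-\tau/4)\dist(z,\partial G) > \tau\dist(z,\partial G)/2 = 2\rho.
\]
Consequently, for all $x,y\in B$ one has $|x-y|<2\rho < \tau\dist(x,\partial G)$, i.e.\ $y\in B^n(x,\tau\dist(x,\partial G))$. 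Hence $B\times B$ is contained in the domain of integration in $|u|_{W^{\delta,p}_\tau(G)}^p$, and
\[
I:=\int_B\int_B \frac{|u(x)-u(y)|^p}{|x-y|^{n+\delta p}}\,dy\,dx \le |u|_{W^{\delta,p}_\tau(G)}^p <\infty.
\]

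Second, I would extract $L^p$-integrability from $I<\infty$ via a pigeonhole step. By Fubini's theorem, the inner integral
\[
J(y):=\int_B \frac{|u(x)-u(y)|^p}{|x-y|^{n+\delta p}}\,dx
\]
is finite for almost every $y\in B$. I pick any such $y_0$; the value $u(y_0)$ is a finite real number since $u:G\to\R$. Using the trivial bound $|x-y_0|^{n+\delta p}\le (2\rho)^{n+\delta p}$ for $x\in B$, I obtain
\[
\int_B |u(x)-u(y_0)|^p\,dx \le (2\rho)^{n+\delta p}\,J(y_0)<\infty,
\]
so $u-u(y_0)\in L^p(B)$ and hence $u\in L^p(B)$, as required.

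The main obstacle is that one cannot invoke a fractional Poincar\'e-type estimate of the form $\|u-u_B\|_{L^p(B)}\le C\rho^\delta |u|_{W^{\delta,p}(B)}$ directly, since such an inequality already presupposes $u\in L^1(B)$ for the mean $u_B$ to be defined. The pigeonhole step replaces $u_B$ by a pointwise value $u(y_0)$ at a Fubini-good point, which sidesteps this obstruction and works uniformly in the full range $0<p<\infty$ allowed by the hypothesis.
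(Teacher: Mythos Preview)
Your argument is correct and follows essentially the same route as the paper's proof: both reduce to small balls, use finiteness of the restricted double integral together with Fubini/Tonelli to locate a good point $y_0$ (the paper's $z_x$) with finite slice, and then conclude $u-u(y_0)\in L^p$ locally before covering $K$ by finitely many such balls. The only cosmetic difference is your direct choice $\rho=\tau\dist(z,\partial G)/4$ in place of the paper's auxiliary parameter $\varepsilon$; just note the trivial case $G=\R^n$, where your $\rho$ is undefined and one may simply take any finite radius (or, as the paper does, remove a point from $G\setminus K$).
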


\begin{proof}
We may assume that $G\not=\R^n$. If $G=\R^n$, then we just remove one point from $G\setminus K$.
By covering  $K$ with a finite number of
balls $B$ such that $\overline{B}\subset G$ we may assume that 
$K$ is the closure of such a ball.
Let us fix $\varepsilon>0$ such that $\varepsilon\tau/(1-\varepsilon \tau)<\tau$.
We obtain
\begin{equation}
\label{base_fin}
\begin{split}
\int_K\int_{K\cap B^n(z,\tau \mathrm{dist}(z,\partial G))}&  \lvert u(z)-u(y)\rvert^p\,dy\,dz \\&\le \mathrm{diam}(K)^{n+\delta p}\int_K\int_{K\cap B^n(z,\tau \mathrm{dist}(z,\partial G))} \frac{\lvert u(z)-u(y)\rvert^p}{\lvert z-y\rvert^{n+\delta p}}\,dy\,dz\\&\le \diam(K)^{n+\delta p}\lvert u\rvert_{W^{\delta,p}_\tau(G)}^p<\infty\,.
\end{split}
\end{equation}
Let us fix $x\in K$ and $0<r_x<\varepsilon \tau \dist(x,\partial G)$. Since
$K$ is the closure of  some ball, we have 
the inequality $\lvert K\cap B^n(x,r_x)\rvert>0$. 
By our estimates in \eqref{base_fin} there is a point $z_x\in K\cap B^n(x,r_x)$ so that
\begin{equation}\label{finite}
\int_{K\cap B^n(z_x,\tau \mathrm{dist}(z_x,\partial G))}  \lvert u(z_x)-u(y)\rvert^p\,dy < \infty\,.
\end{equation}
By the choice of $\varepsilon$ we have $x\in B^n(z_x,\tau \dist(z_x,\partial G))$ for
each $x\in K$. Thus,
\[
K\subset \bigcup_{x\in K} B^n(z_x,\tau \dist(z_x,\partial G))\,.
\]
By the compactness of the set $K$ there are points $x_1,\ldots,x_N$ in $K$ such  that
$K$ is contained in the union of the balls $B^n(z_i,\tau \dist(z_i,\partial G))$, where $z_i=z_{x_i}$ for
each $i$. Hence, by inequality \eqref{finite} we obtain
\begin{align*}
\int_K \lvert u(y)\rvert^p\,dy &\le \sum_{i=1}^N \int_{K\cap B^n(z_i,\tau \dist(z_i,\partial G))} \lvert u(y)\rvert^p\,dy
\\
&\le 2^p\sum_{i=1}^N \int_{K\cap B^n(z_i,\tau \dist(z_i,\partial G))} \lvert u(z_i)\rvert^p + \lvert u(z_i)-u(y)\rvert^p\,dy<\infty\,.
\end{align*}
This concludes the proof.
\end{proof}

The following definition is from \cite[\S1]{MR2317850}.
It arises from  generalized 
Poincar\'e inequalities that are studied in \cite[\S 7]{MR1609261}.
Let us fix $\kappa \ge 1$ and an open set $G$ in $\R^n$. For  $\delta\in [0,1]$,  $0<p\le \infty$, 
and $u\in L^1_{\textup{loc}}(G)$ we write
\[
\lvert u\rvert_{A^{\delta,p}_\kappa(G)} = \sup_{\mathcal{Q}_\kappa(G)} \Bigg\lVert \sum_{Q\in\mathcal{Q}_\kappa(G)} \bigg(\frac{1}{\lvert Q\rvert^{1+\delta/n}}\int_Q \lvert u(x)-u_Q\rvert\,dx \bigg)\chi_Q\Bigg\lVert_{L^p(G)}\,,
\]
where the supremum is taken over all families of cubes $\mathcal{Q}_\kappa(G)$ such that $\kappa Q\subset G$
for every $Q\in\mathcal{Q}_\kappa(G)$ and $Q\cap R = \emptyset$ if $Q$ and $R$ belong
to $\mathcal{Q}_\kappa(G)$ and $Q\not=R$.

\begin{lem}\label{embed_l}
Suppose that $G$ is an open set in $\R^n$.
Let $0<\tau,\delta<1$ and $1\le p<\infty$ be given.
Then there is a constant $\kappa=\kappa(n,\tau)\ge 1$ such that inequality
\begin{equation}\label{e.embed}
\lvert u\rvert_{A^{\delta,p}_\kappa(G)}\le (\sqrt n)^{n/p+\delta}\lvert u\rvert_{W^{\delta,p}_\tau(G)}
\end{equation}
holds for every $u\in L^1(G)$.
\end{lem}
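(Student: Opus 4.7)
The plan is to choose $\kappa$ large enough in terms of $\tau$ and $n$ so that any two points in a cube $Q$ with $\kappa Q\subset G$ satisfy the ``locality'' required in the $W^{\delta,p}_\tau(G)$-seminorm. Concretely, if $Q$ has side length $\ell(Q)=|Q|^{1/n}$ and $\kappa Q\subset G$, then for every $x\in Q$ one has $\dist(x,\partial G)\ge (\kappa-1)\ell(Q)/2$, whereas any $y\in Q$ satisfies $|x-y|\le\sqrt{n}\,\ell(Q)$. Thus picking any $\kappa=\kappa(n,\tau)$ with $\kappa>1+2\sqrt{n}/\tau$ guarantees $y\in B^n(x,\tau\dist(x,\partial G))$ whenever $x,y\in Q$ and $\kappa Q\subset G$. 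This geometric step is the only place the Whitney-like parameter $\kappa$ is used.

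Next I would reduce the oscillation on a single cube to a pointwise integral via the identity
\[
\int_Q \lvert u(x)-u_Q\rvert\,dx\le \frac{1}{\lvert Q\rvert}\int_Q\int_Q \lvert u(x)-u(y)\rvert\,dy\,dx,
\]
and then apply Jensen's inequality in the form $\big(\int_Q\int_Q f\big)^p\le \lvert Q\rvert^{2(p-1)}\int_Q\int_Q f^p$ together with the pointwise bound $\lvert x-y\rvert^{n+\delta p}\le (\sqrt{n})^{n+\delta p}\lvert Q\rvert^{1+\delta p/n}$ for $x,y\in Q$. Combining these gives, for every $Q\in\mathcal{Q}_\kappa(G)$, the key local estimate
\[
\lvert Q\rvert\,\bigg(\frac{1}{\lvert Q\rvert^{1+\delta/n}}\int_Q\lvert u(x)-u_Q\rvert\,dx\bigg)^p
\le (\sqrt{n})^{n+\delta p}\int_Q\int_Q\frac{\lvert u(x)-u(y)\rvert^p}{\lvert x-y\rvert^{n+\delta p}}\,dy\,dx.
\]

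Finally, I would sum this estimate over the disjoint family $\mathcal{Q}_\kappa(G)$. By the first step, for each such $Q$ and each $x\in Q$ the inner integration over $Q$ may be enlarged to integration over $B^n(x,\tau\dist(x,\partial G))$; since the $Q$'s are pairwise disjoint subsets of $G$, summing yields
\[
\sum_Q \int_Q\int_Q\frac{\lvert u(x)-u(y)\rvert^p}{\lvert x-y\rvert^{n+\delta p}}\,dy\,dx\le \lvert u\rvert_{W^{\delta,p}_\tau(G)}^p.
\]
Taking $p$-th roots produces exactly the constant $(\sqrt n)^{n/p+\delta}$, and then passing to the supremum over all admissible families $\mathcal{Q}_\kappa(G)$ gives \eqref{e.embed}.

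The only genuine difficulty is the first, geometric step: calibrating $\kappa$ so that the engulfing property $Q\subset B^n(x,\tau\dist(x,\partial G))$ for all $x\in Q$ holds; once this is in place, the rest is a bookkeeping exercise with Jensen's inequality and the scaling of $\lvert x-y\rvert^{n+\delta p}$ on $Q$. The constant $(\sqrt{n})^{n/p+\delta}$ is sharp in this argument because it is precisely what arises from the diameter bound $\diam(Q)=\sqrt n\,\lvert Q\rvert^{1/n}$.
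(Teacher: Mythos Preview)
Your proposal is correct and follows essentially the same route as the paper's proof: choose $\kappa$ so that $Q\subset B^n(x,\tau\dist(x,\partial G))$ for every $x\in Q\in\mathcal{Q}_\kappa(G)$, then apply Jensen's inequality twice together with the bound $\lvert x-y\rvert^{n+\delta p}\le (\sqrt n)^{n+\delta p}\lvert Q\rvert^{1+\delta p/n}$, and sum over the disjoint family. The only cosmetic difference is that the paper first uses Jensen on the single average $\tfrac{1}{|Q|}\int_Q|u-u_Q|$ and then on $|u(x)-u_Q|^p$, whereas you combine these into one Jensen step on the double integral; the resulting local estimate and constant are identical.
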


\begin{proof} Let us choose $\kappa=\kappa(n,\tau)\ge 1$ such that $Q\subset B^n(x,\tau \dist(x,\partial G))$ whenever $x\in Q\in\mathcal{Q}_\kappa(G)$.
Then we fix a family of cubes $\mathcal{Q} :=\mathcal{Q}_\kappa(G)$. By Jensen's inequality we obtain
\begin{align*}
\sum_{Q\in\mathcal{Q}} \lvert Q\rvert \bigg(\frac{1}{\lvert Q\rvert^{1+\delta/n}}\int_Q \lvert u(x)-u_Q\rvert\,dx\bigg)^p
\le \sum_{Q\in\mathcal{Q}} \lvert Q\rvert^{-\delta p/n}\int_Q \lvert u(x)-u_Q\rvert^p\,dx\,.
\end{align*}
By using Jensen's inequality again
\begin{align*}
&\sum_{Q\in\mathcal{Q}} \lvert Q\rvert^{-\delta p/n}\int_Q \lvert u(x)-u_Q\rvert^p\,dx\\&\le  (\sqrt n)^{n+\delta p}\sum_{Q\in\mathcal{Q}} \int_Q\int_Q  \frac{\lvert u(x)-u(y)\rvert^p}{\lvert x-y\rvert^{n+\delta p}}\,dy\,dx \le (\sqrt n)^{n+\delta p}\lvert u\rvert_{W^{\delta,p}_\tau(G)}^p\,.
\end{align*}
Taking supremum over all families $\mathcal{Q}_k(G)$ gives inequality \eqref{e.embed}.
\end{proof}

\section{Inequalities in bounded John domains}\label{s.bounded}

We give the following fractional Sobolev--Poincar\'e inequality in bounded John domains. 
The inequality for $p>1$ is already in \cite[Theorem 4.10]{H-SV}, but we need a better control over
the dependencies of the constant $C$. 

\begin{thm}\label{t.sub}
Suppose that $D$ is a bounded $c$-John domain in $\R^n$, $n\ge 2$. Let 
$\tau,\delta\in (0,1)$ and $1\le p< n/\delta$ be given. Then there is a constant
$C=C(\delta,\tau,p,n,c)>0$ such that
the fractional Sobolev--Poincar\'e inequality 
\begin{equation}\label{fractionalq1}
\int_D\vert u(x)-u_D\vert ^{np/(n-\delta p)}\,dx
\le
C
\lvert u \rvert_{W^{\delta,p}_\tau(D)}^{np/(n-\delta p)}
\end{equation}
holds for every $u\in L^1(D)$.
\end{thm}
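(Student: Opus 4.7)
The plan is to follow the approach of \cite{H-SV}, but to keep careful track of the constants so that the final dependence is precisely $C(\delta,\tau,p,n,c)$. The natural route is to reduce the fractional Sobolev--Poincar\'e inequality to a discrete Sobolev--Poincar\'e inequality expressed in terms of the seminorm $\lvert u\rvert_{A^{\delta,p}_\kappa(D)}$, at which point Lemma~\ref{embed_l} converts the estimate into one for $\lvert u\rvert_{W^{\delta,p}_\tau(D)}$ with the correct admissible $\kappa=\kappa(n,\tau)$. Thus, after this reduction, the remaining problem is purely about oscillation on Whitney cubes of the bounded John domain $D$, and $\tau$ enters only through $\kappa$.

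The next step is to fix a Whitney decomposition $\mathcal{W}(D)$ and, using Lemma~\ref{t.equi}, to choose a central Whitney cube $Q_0$ together with, for every Whitney cube $Q\in\mathcal{W}(D)$, a Whitney chain $Q=Q_k,Q_{k-1},\dots,Q_1,Q_0$ produced by following the John curve from $Q$ to $Q_0$. Standard arguments give chain cubes with comparable sidelengths $\ell(Q_{j-1})\sim \ell(Q_j)$, substantial overlap $\lvert Q_j\cap Q_{j-1}\rvert\gtrsim \lvert Q_j\rvert$, and a "shadow" property that limits how often a given cube can appear in the union of all chains, all with constants depending only on $c$ and $n$. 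Using this chain one obtains the pointwise inequality
\[
\lvert u(x)-u_{Q_0}\rvert \le \lvert u(x)-u_Q\rvert + \sum_{j=1}^{k}\lvert u_{Q_{j-1}}-u_{Q_j}\rvert, \qquad x\in Q,
\]
in which each term $\lvert u_{Q_{j-1}}-u_{Q_j}\rvert$ is controlled by $\lvert Q_j\rvert^{\delta/n}$ times a local fractional oscillation $\lvert Q_j\rvert^{-1-\delta/n}\int_{Q_j}\lvert u-u_{Q_j}\rvert\,dx$, which is exactly the quantity appearing inside $\lvert u\rvert_{A^{\delta,p}_\kappa(D)}$.

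To pass from these pointwise Riesz-type sums to the sharp Sobolev exponent $q=np/(n-\delta p)$ I would use the standard Maz'ya/Hajlasz--Koskela truncation machinery adapted to John domains: test the inequality on truncations $u_\lambda=\max(\min(u,2\lambda),\lambda)-\lambda$, derive a weak-type bound from the chain estimate using the disjointness of Whitney cubes and of the families in the supremum defining $\lvert\cdot\rvert_{A^{\delta,p}_\kappa(D)}$, and then sum over dyadic levels. This yields
\[
\Bigl(\int_D\lvert u(x)-u_{Q_0}\rvert^{np/(n-\delta p)}\,dx\Bigr)^{(n-\delta p)/(np)}\le C(\delta,p,n,c)\,\lvert u\rvert_{A^{\delta,p}_\kappa(D)},
\]
and a final triangle inequality replaces $u_{Q_0}$ by $u_D$. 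For $p=1$ the truncation step is replaced by a direct appeal to \cite{Dyda3}. The main obstacle will be the last step: tracking the combinatorial constants through the truncation/weak-type argument so that the final constant depends only on $\delta,p,n,c$ and is independent of $\operatorname{diam}(D)$. This independence is crucial for the subsequent application to unbounded domains via the engulfing Lemma~\ref{t.engulfing}, where the bounded-case constant must not blow up along the exhausting sequence $D_i\nearrow D$.
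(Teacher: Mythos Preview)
Your proposal is correct and follows essentially the same approach as the paper: reduction to $\lvert u\rvert_{A^{\delta,p}_\kappa(D)}$ via Lemma~\ref{embed_l}, a Whitney chain argument anchored at the John center (formalized in the paper as Lemma~\ref{chain}), a weak-type bound, and Maz'ya truncation to pass to the strong inequality. The paper packages this as Proposition~\ref{WeakEquivToStrong} (truncation, taken from \cite{Dyda3}) applied to Proposition~\ref{p.weak}, the latter being proved via Theorem~\ref{iso}; note also that the case $p=1$ is handled within this same framework and needs no separate appeal.
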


Theorem \ref{t.sub} follows from Proposition \ref{WeakEquivToStrong} and Proposition \ref{p.weak}.
The following  result from \cite{Dyda3}, based upon the Maz'ya truncation
method \cite{Maz} adapted to the fractional setting,
shows that it is enough to prove a weak fractional Sobolev--Poincar\'e inequality.

\begin{prop}\label{WeakEquivToStrong}
Suppose that $G$  is an open set in $\R^n$ with $\lvert G\rvert<\infty$.
Let $0<\delta,\tau <1$ and 
$0<p\le q<\infty$ be given.
Then the following conditions are equivalent.
\begin{itemize}
\item[(A)]
There is a constant $C_1>0$ such that inequality
\begin{align*}
&\quad\inf_{a\in\R}\sup_{t>0}\lvert \{x\in G:\,|u(x)-a|>t\}\rvert t^{q} 
 \\&\qquad\qquad\qquad  \le C_1
\biggl(\int_{G}\int_{B^n(y,\tau \dist(y,\partial G))}\frac{\vert u(y)-u(z)\vert^p}{\vert y-z\vert ^{n+\delta p}}
\,dz\,dy\biggr)^{q/p}
\end{align*}
holds for  every $u\in L^\infty(G)$.
\item[(B)]
There is a constant $C_2>0$ such that inequality
\begin{align*}
\quad \inf_{a\in\R} \int_G\vert u(x)-a\vert ^{q}\,dx
\le
C_2
\bigg(\int_G\int_{B^n(y,\tau \dist(y,\partial G))}\frac{\vert u(y)-u(z)\vert^p}{\vert y-z\vert ^{n+\delta p}}\,dz
\,dy\bigg)^{q/p}
\end{align*}
holds for every $u\in L^1(G)$.
\end{itemize}
In the implication from (A) to (B)  $C_2=C(p,q)C_1$ and  from (B) to (A) 
 $C_1=C_2$. 
\end{prop}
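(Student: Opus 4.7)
The direction (B) $\Rightarrow$ (A) will be immediate from Chebyshev's inequality: for any $a\in\R$ and $t>0$,
\[
\lvert\{x\in G:\lvert u(x)-a\rvert>t\}\rvert\, t^q \le \int_G \lvert u-a\rvert^q\,dx,
\]
and after taking the supremum over $t>0$ and the infimum over $a$ one obtains (A) with $C_1=C_2$. Note that the hypothesis $u\in L^\infty(G)$ in (A) and $\lvert G\rvert<\infty$ ensure $u\in L^1(G)$, so (B) may indeed be applied.

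For (A) $\Rightarrow$ (B), the plan is to invoke Maz'ya's truncation method adapted to the fractional seminorm $\lvert\cdot\rvert_{W^{\delta,p}_\tau(G)}$, following \cite{Dyda3}. Using $\lvert G\rvert<\infty$, I would first fix a median $a_0$ of $u$, i.e.\ a real number with $\lvert\{u>a_0\}\rvert\le\lvert G\rvert/2$ and $\lvert\{u<a_0\}\rvert\le\lvert G\rvert/2$, and split $u-a_0=(u-a_0)_+-(u-a_0)_-$. It is then enough to bound $\int_G w^q\,dx$ where $w$ is either of the two nonnegative parts; since $t\mapsto t_+$ is $1$-Lipschitz one has $\lvert w\rvert_{W^{\delta,p}_\tau(G)}\le\lvert u\rvert_{W^{\delta,p}_\tau(G)}$.

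Next I would apply the dyadic truncations $w_k:=\min\bigl((w-2^k)_+,\,2^k\bigr)$ for $k\in\Z$, so that $w_k=0$ on $\{w\le 2^k\}$ and $w_k=2^k$ on $\{w\ge 2^{k+1}\}$. Each $w_k$ lies in $L^\infty(G)$, so (A) applies. The median property --- namely that $\{w=0\}\subseteq\{w_k=0\}$ has measure at least $\lvert G\rvert/2$ --- forces, for every $a\in\R$, at least one of the sets $\{w_k=0\}$, $\{w_k=2^k\}$ to be contained in $\{\lvert w_k-a\rvert\ge 2^{k-1}\}$. Hence, restricting to indices $k$ for which $\lvert\{w\ge 2^{k+1}\}\rvert\le\lvert G\rvert/2$, (A) would yield
\[
2^{kq}\,\lvert\{w\ge 2^{k+1}\}\rvert\le C(q)\,C_1\,\lvert w_k\rvert_{W^{\delta,p}_\tau(G)}^q.
\]

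Finally, the layer-cake identity $\int_G w^q\,dx\le C(q)\sum_k 2^{kq}\lvert\{w\ge 2^{k+1}\}\rvert$, together with $\sum_k x_k^{q/p}\le\bigl(\sum_k x_k\bigr)^{q/p}$ for $x_k\ge 0$ and $q/p\ge 1$, would reduce the problem to the key fractional Maz'ya truncation estimate
\[
\sum_{k\in\Z}\lvert w_k\rvert_{W^{\delta,p}_\tau(G)}^p\le C(p)\,\lvert w\rvert_{W^{\delta,p}_\tau(G)}^p.
\]
The hard part is exactly this last inequality: it follows from the pointwise bound $\sum_k\lvert w_k(x)-w_k(y)\rvert^p\le C(p)\lvert w(x)-w(y)\rvert^p$, which one proves by a dyadic case analysis on the locations of $w(x)$ and $w(y)$ between consecutive powers of $2$, and is the core contribution of \cite{Dyda3}. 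Summing the contributions of $(u-a_0)_+$ and $(u-a_0)_-$ then yields (B) with $C_2=C(p,q)\,C_1$, as claimed.
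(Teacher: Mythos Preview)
Your proposal is correct and follows exactly the approach the paper invokes: the paper does not give its own proof of this proposition but attributes it to \cite{Dyda3}, describing it as ``the Maz'ya truncation method \cite{Maz} adapted to the fractional setting,'' which is precisely the argument you have sketched (Chebyshev for (B)$\Rightarrow$(A); median splitting, dyadic truncations $w_k$, and the pointwise bound $\sum_k\lvert w_k(x)-w_k(y)\rvert^p\le C(p)\lvert w(x)-w(y)\rvert^p$ for (A)$\Rightarrow$(B)). Your handling of the median to force $\lvert\{w=0\}\rvert\ge\lvert G\rvert/2$, and hence $\lvert\{w\ge 2^{k+1}\}\rvert\le\lvert G\rvert/2$ for all $k$, is the right mechanism to extract a genuine level-set estimate from the $\inf_a$ in (A).
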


The weak fractional Sobolev--Poincar\'e inequalities
hold in bounded John domains by the following proposition.

\begin{prop}\label{p.weak}
Suppose that $D$ is a bounded $c$-John domain in $\R^n$.
Let 
$\tau,\delta\in (0,1)$ and $1\le p< n/\delta$ be given. Then there is a constant
$C=C(\delta,\tau,p,n,c)>0$ such that
the weak fractional Sobolev--Poincar\'e inequality 
\begin{equation*}
\inf_{a\in\R} \sup_{t>0} \vert \{x\in D\,:\, \lvert u(x)-a\rvert  > t \}\rvert t^{np/(n-\delta p)}\le
C
\lvert u \rvert_{W^{\delta,p}_\tau(D)}^{np/(n-\delta p)}
\end{equation*}
holds for every $u\in L^\infty(D)$.
\end{prop}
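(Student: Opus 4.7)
By Lemma~\ref{embed_l}, applied with $G=D$ and some $\kappa=\kappa(n,\tau)\ge 1$, it suffices to establish the weak-type inequality with the right-hand side replaced by a constant times $\lvert u\rvert_{A^{\delta,p}_\kappa(D)}^{np/(n-\delta p)}$. After enlarging the Whitney constant by a factor depending only on $n$ and $\kappa$, I may assume that the Whitney decomposition $\mathcal{W}(D)$ satisfies $\kappa Q\subset D$ for every $Q\in\mathcal{W}(D)$, so that $\mathcal{W}(D)$ is admissible in the supremum defining $\lvert u\rvert_{A^{\delta,p}_\kappa(D)}$. By Lemma~\ref{t.equi} I fix a central point $x_0\in D$ and let $Q_0\in\mathcal{W}(D)$ denote the cube containing~$x_0$. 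The infimum $\inf_{a\in\R}$ in the weak-type bound will be realized by choosing $a=u_{Q_0}$.

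Next, for each $x\in D$ I build a \emph{John chain} $\mathcal{C}(x)=(Q_0,Q_1,\dots,Q_{k(x)})$ of Whitney cubes ending in $Q_{k(x)}\ni x$, extracted from the arc-length parametrized John curve supplied by Lemma~\ref{t.equi}. Using $\dist(\gamma(t),\partial D)\ge t/(4c^2)$ one arranges that consecutive cubes in the chain are neighbors, their sidelengths are comparable with ratios depending only on $n$ and $c$, they decay geometrically toward $x$ with $\ell(Q_{k(x)})\asymp \dist(x,\partial D)$, and the chain satisfies the standard bounded--shadow estimate: for every $Q'\in\mathcal{W}(D)$, the set $S(Q')=\{x\in D:Q'\in\mathcal{C}(x)\}$ is contained in a dilate of $Q'$ of factor depending only on $n,c$. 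These combinatorial properties of John chains are classical; see \cite{MR1246886}.

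Telescoping along $\mathcal{C}(x)$ with $a=u_{Q_0}$ then gives, for almost every $x\in Q_{k(x)}$,
\[
\lvert u(x)-u_{Q_0}\rvert\le \frac{1}{\lvert Q_{k(x)}\rvert}\int_{Q_{k(x)}}\lvert u(x)-u(y)\rvert\,dy+\sum_{j=0}^{k(x)-1}\lvert u_{Q_{j+1}}-u_{Q_j}\rvert\le C\sum_{Q\in\mathcal{C}(x)}\ell(Q)^{\delta}b_Q,
\]
where $b_Q=\lvert Q\rvert^{-1-\delta/n}\int_Q\lvert u(y)-u_Q\rvert\,dy$ and $C=C(n,c)$. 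Because the sidelengths in $\mathcal{C}(x)$ decay geometrically toward $x$, the bounded--shadow property lets me dominate this sum pointwise by a discrete Riesz potential of $f=\sum_{Q\in\mathcal{W}(D)}b_Q\chi_Q$: namely $\sum_{Q\in\mathcal{C}(x)}\ell(Q)^{\delta}b_Q\le C\,I_\delta f(x)$, with $I_\delta f(x)=\int_D\lvert x-y\rvert^{\delta-n}f(y)\,dy$. The classical weak-type $(p,np/(n-\delta p))$ bound for $I_\delta$ (valid down to $p=1$) combined with $\lVert f\rVert_{L^p(D)}\le \lvert u\rvert_{A^{\delta,p}_\kappa(D)}$ gives the desired estimate, and Lemma~\ref{embed_l} closes the argument.

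The main obstacle is ensuring that every constant in the chaining, telescoping and weak-type steps depends only on $\delta,\tau,p,n,c$, with no hidden dependence on $\mathrm{diam}(D)$; this independence is precisely the refinement over \cite{H-SV} advertised before the statement, and is what permits the subsequent passage to unbounded John domains via the engulfing Lemma~\ref{t.engulfing}. A secondary technical point is the endpoint $p=1$, where the weak-type step must appeal to the genuine weak-type boundedness of $I_\delta$ rather than any maximal-function argument.
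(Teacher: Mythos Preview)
Your overall strategy---reduce via Lemma~\ref{embed_l} to the $A^{\delta,p}_\kappa$-seminorm, build John chains from Lemma~\ref{t.equi}, telescope, and pass to a Riesz potential whose weak $(p,np/(n-\delta p))$ bound covers the endpoint $p=1$---is precisely the route of \cite[Theorem~4.10]{H-SV} that the paper itself cites as a valid ``simple proof''. The paper's own argument is organized differently: it proves the more general Theorem~\ref{iso}, splitting $\lvert u-u_{Q_0^*}\rvert$ into a local piece $g_1$ and a chaining piece $g_2$, bounding $g_1$ cube-by-cube via the self-improving weak Poincar\'e of Lemma~\ref{cube_lem}, and bounding $g_2$ by direct $\ell^p$ summation over the quantitative chain decomposition of Lemma~\ref{chain}, never invoking $I_\delta$.

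There is, however, a genuine gap in your telescoping step. The claimed pointwise inequality
\[
\frac{1}{\lvert Q_{k(x)}\rvert}\int_{Q_{k(x)}}\lvert u(x)-u(y)\rvert\,dy\;+\;\sum_{j=0}^{k(x)-1}\lvert u_{Q_{j+1}}-u_{Q_j}\rvert\ \le\ C\sum_{Q\in\mathcal{C}(x)}\ell(Q)^{\delta}b_Q
\]
fails because of the first (local) term, which is essentially $\lvert u(x)-u_{Q_{k(x)}}\rvert$ and admits no pointwise control by $\ell(Q_{k(x)})^{\delta}b_{Q_{k(x)}}=\lvert Q_{k(x)}\rvert^{-1}\int_{Q_{k(x)}}\lvert u-u_{Q_{k(x)}}\rvert$. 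A concrete counterexample: on a single Whitney cube $Q$ take $u=\chi_E$ with $\lvert E\rvert=\varepsilon\lvert Q\rvert$; for $x\in E$ the local term equals $1-\varepsilon$, while $\ell(Q)^{\delta}b_Q=2\varepsilon(1-\varepsilon)$. Hence the domination $\lvert u(x)-u_{Q_0}\rvert\le C\,I_\delta f(x)$ is false on a set of positive measure, and the weak-type bound for $I_\delta$ alone cannot close the argument. The repair is exactly the $g_1/g_2$ split the paper uses: write $\lvert u(x)-u_{Q_0}\rvert\le \lvert u(x)-u_{Q(x)}\rvert+\lvert u_{Q(x)}-u_{Q_0}\rvert$, control the second (chaining) term by your Riesz-potential estimate, and control the first by a separate local weak-type inequality on each Whitney cube (Lemma~\ref{cube_lem}, or any fractional Sobolev--Poincar\'e on cubes), which contributes $C\lvert u\rvert_{A^{\delta,p}_\kappa(D)}^{np/(n-\delta p)}$ with constants depending only on $(\delta,\tau,p,n,c)$.
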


For a simple proof of Proposition \ref{p.weak} we refer to
\cite[Theorem 4.10]{H-SV}. The dependencies of
the constants appearing in \cite[Theorem 4.10]{H-SV} can be tracked more
explicitly in order to obtain Proposition \ref{p.weak}.
In the present paper, we give 
a more general argument that might be of independent interest.

\medskip
The following Theorem \ref{iso} is the key result for proving Proposition \ref{p.weak}.

\begin{thm}\label{suff_thm}\label{iso}
Suppose that $D$ is a bounded $c$-John domain in $\R^n$. Let $\kappa\ge 1$ be fixed.
Let $\delta\in [0,1]$ and $1\le  p<n/\delta$ be given.
Then there exists a constant $C=C(n,\kappa,p,\delta,c)$ such that the inequality
\begin{equation}\label{fract}
\inf_{a\in\R} \sup_{t>0}Ê\lvert \{x\in D\,:\, \lvert u(x)-a\rvert > t\}\rvert t^{np/(n-\delta p)} \le C\lvert u\rvert_{A^{\delta,p}_\kappa(D)}^{np/(n-\delta p)}
\end{equation}
holds for every $u\in L^1(D)$.
\end{thm}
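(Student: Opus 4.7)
The plan is to prove Theorem~\ref{iso} in two stages: a John-chain telescoping argument that turns $|u(x)-a|$ into a discrete fractional integral of the normalized local oscillations, followed by a stopping-time and covering argument that reads off the weak $L^q$ bound.

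First I would replace the standard Whitney decomposition $\mathcal{W}(D)$ by a refined decomposition $\widetilde{\mathcal{W}}$ obtained by subdividing each Whitney cube into $M^n$ equal pieces, with $M=M(n,\kappa)$ chosen so that $\kappa Q\subset D$ and $\ell(Q)\asymp\dist(Q,\partial D)$ for every $Q\in\widetilde{\mathcal{W}}$. Lemma~\ref{t.equi} supplies a central point $x_0\in D$; let $Q_*\in\widetilde{\mathcal{W}}$ be the cube containing $x_0$. For every $Q\in\widetilde{\mathcal{W}}$ the John arc from $x_0$ to a point of $Q$ produces a finite chain $\mathcal{C}(Q)=(Q_*=Q^0,Q^1,\ldots,Q^{k_Q}=Q)$ in $\widetilde{\mathcal{W}}$ whose consecutive members touch, whose side lengths are comparable by a factor $\Lambda(n,c)$, and which enjoys the Boman shadow condition $\bigcup\{Q:R\in\mathcal{C}(Q)\}\subset\Lambda R$ for every $R\in\widetilde{\mathcal{W}}$.

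Setting $a:=u_{Q_*}$ and $\phi(R):=|R|^{-1-\delta/n}\int_R|u-u_R|$, Jensen's inequality applied to pairs of adjacent cubes and telescoping along the chain give, for a.e.\ $x\in Q$,
\[
|u(x)-a|\le|u(x)-u_Q|+\sum_{i=0}^{k_Q-1}|u_{Q^i}-u_{Q^{i+1}}|\le C\sum_{R\in\mathcal{C}(Q)}|R|^{\delta/n}\phi(R);
\]
the Lebesgue-point residual is absorbed into the last chain term by the geometric growth of $|R|^{\delta/n}$ toward the endpoint $Q$. This is a discrete analog of the Riesz-potential bound $|u-a|\lesssim I_\delta\phi$ in which the Riesz kernel has been replaced by the chain geometry.

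To pass from this pointwise estimate to the weak-type bound on the superlevel set $E_t=\{|u-a|>t\}$, I would use a stopping-time selection: for each $x\in E_t$ choose a cube $R(x)\in\mathcal{C}(Q_x)$ at which the chain sum exceeds its fair share of $t$, and then extract a pairwise disjoint subfamily $\mathcal{F}_t\subset\widetilde{\mathcal{W}}$ from $\{R(x):x\in E_t\}$ by a Vitali-type argument. The Boman shadow condition then yields $|E_t|\le C\sum_{R\in\mathcal{F}_t}|R|$, and combining the per-cube stopping inequality with the $A$-norm bound $\sum_{R\in\mathcal{F}_t}|R|\phi(R)^p\le|u|_{A^{\delta,p}_\kappa(D)}^p$ and the Sobolev relation $1/q=1/p-\delta/n$ should produce $|E_t|\,t^q\le C|u|_{A^{\delta,p}_\kappa(D)}^q$.

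The main obstacle is this last step. A naive stopping-time argument produces only a weak $L^p$ bound in terms of the $A$-norm; the crucial improvement from $p$ to the Sobolev exponent $q>p$ must come from a more delicate level-set truncation that exploits the geometric scaling of cubes along the John chain, and it has to be carried out so that the final constant depends only on $n,\kappa,p,\delta,c$ and is in particular independent of $\diam(D)$ and of the scale $\ell(Q_*)$. In essence this is a discrete, John-domain version of the weak-type Hardy--Littlewood--Sobolev theorem for the Riesz potential $I_\delta$.
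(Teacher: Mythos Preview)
Your setup---the refined Whitney family $\widetilde{\mathcal{W}}$, the choice of a central cube via Lemma~\ref{t.equi}, the telescoping bound $\lvert u(x)-a\rvert\le C\sum_{R\in\mathcal{C}(Q)}\lvert R\rvert^{\delta/n}\phi(R)$, and the Boman shadow condition---matches the paper's scaffolding exactly. The gap is precisely the one you identify at the end: you do not have a mechanism for upgrading from $p$ to $q=np/(n-\delta p)$, and the stopping-time route you sketch does not supply one. A Vitali selection plus the shadow condition yields $\lvert E_t\rvert\le C\sum_{R\in\mathcal{F}_t}\lvert R\rvert$, but the per-cube stopping inequality only gives $\lvert R\rvert^{\delta/n}\phi(R)\gtrsim t$ (or a geometric fraction thereof), and combining this with $\sum_R\lvert R\rvert\phi(R)^p\le\lvert u\rvert^p_{A^{\delta,p}_\kappa}$ produces $\lvert E_t\rvert t^p\lesssim\lvert u\rvert^p_{A^{\delta,p}_\kappa}$, not the required $t^q$. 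The discrete weak Hardy--Littlewood--Sobolev step you are hoping for is not a formality.

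The paper avoids stopping times altogether by splitting $\lvert u(x)-u_{Q_0^*}\rvert\le g_1(x)+g_2(x)$, where $g_1(x)=\lvert u(x)-u_{Q^*}\rvert$ on each $Q\in\widetilde{\mathcal{W}}$ and $g_2$ is the chain sum $\lvert u_{Q^*}-u_{Q_0^*}\rvert$. The local piece $g_1$ is handled cube by cube via a separate lemma (Lemma~\ref{cube_lem}) which says that on any cube $Q$ one has the weak-type bound $\sup_t t^q\lvert\{x\in Q:\lvert u-u_Q\rvert>t\}\rvert\le C\lvert u\rvert_{A^{\delta,p}_1(Q)}^q$; this lemma is itself nontrivial and rests on the self-improving Poincar\'e machinery of \cite{MR1609261}. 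For the chain piece $g_2$ the paper does not use a weak estimate at all: it proves a \emph{strong} $\ell^q$ bound $\sum_Q\lvert Q\rvert\lvert u_{Q^*}-u_{Q_0^*}\rvert^q\le C\lvert u\rvert_{A^{\delta,p}_\kappa(D)}^q$. The exponent jump enters through the observation that the mean oscillation over $R^*$ is bounded by $\lvert R^*\rvert^{-1/q}\lvert u\rvert_{A^{\delta,p}_1(R^*)}$ (since $1-p/q-\delta p/n=0$), which converts each term of the chain into a quantity already carrying the right $q$-scaling; summing is then done via a quantitative, \emph{weighted} shadow condition (property~(3) of Lemma~\ref{chain}) that goes beyond the plain Boman inclusion you state. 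In short, the missing ingredients are Lemma~\ref{cube_lem} for the local part and the weighted shadow estimate for the chain part; the stopping-time idea you propose does not substitute for either.
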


We give the proof of Theorem \ref{iso}
in Section \ref{s.iso_proof}.
By using Theorem \ref{iso} the claim of  Proposition  \ref{p.weak}
follows easily.

\begin{proof}[Proof of Proposition  \ref{p.weak}]
 
By Lemma~\ref{embed_l} it is enough to prove that there is a constant $C=C(\delta , \tau , p, n, c)$ such that the inequality
\[
\inf_{a\in\R} \sup_{t>0} \vert \{x\in D\,:\, \lvert u(x)-a\rvert >t \}\rvert t^{np/(n-\delta p)}\le 
C\lvert u\rvert_{A^{\delta,p}_{\kappa(n,\tau)}(D)}^{np/(n-\delta p)}
\]
holds for all $u\in L^\infty(D)$.
This inequality follows from  Theorem \ref{iso}.
\end{proof}

\section{Proof of Theorem \ref{iso}}\label{s.iso_proof}
We start to build up the proof for Theorem \ref{iso}
by giving auxiliary results.
The following lemma gives local inequalities.
Similar results are known in metric measure spaces, \cite[Theorem 4.1]{MR2317850}.

\begin{lem}\label{cube_lem}
Let $1\le  p,q<\infty$ be given
such that $1/p-1/q=\delta/n$ with $\delta\in [0,1]$. 
Then there is a  constant $C=C(n,p,\delta)>0$ such that inequality
\begin{equation}\label{assertion}
\sup_{t>0}\lvert \{x\in Q\,:\, \lvert u(x)-u_Q\rvert >t\}\rvert t^q\le C \lvert u\rvert_{A^{\delta,p}_1(Q)}^q
\end{equation}
holds for every cube $Q$ in $\R^n$ and for all $u\in L^1_{\textup{loc}}(\R^n)$.
\end{lem}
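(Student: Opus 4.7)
My plan is a dyadic Calder\'on--Zygmund stopping-time argument based on the Sobolev scaling $1/p - 1/q = \delta/n$. After rescaling I may assume $|Q|=1$, and write $M := \lvert u\rvert_{A^{\delta,p}_1(Q)}$. The trivial bound $|\{|u-u_Q|>t\}|\le|Q|$ already gives $t^q|\{|u-u_Q|>t\}|\le M^q$ when $t\le M$, so I focus on the range $t>M$. In that range $\mathrm{avg}_Q|u-u_Q|=|Q|^{\delta/n}A(Q)\le M<t$, which licenses a dyadic CZ decomposition of $|u-u_Q|$ on $Q$ at level $t$; this produces pairwise disjoint dyadic subcubes $\{Q_j\}$ with $t<\mathrm{avg}_{Q_j}|u-u_Q|\le 2^nt$ and $|u-u_Q|\le t$ a.e.\ on $Q\setminus\bigcup_j Q_j$. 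Hence $\{|u-u_Q|>t\}\subset\bigcup_j Q_j$ modulo a null set, and it suffices to bound $\sum_j|Q_j|\le C(M/t)^q$.

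Next I split the $Q_j$ by the triangle inequality $\mathrm{avg}_{Q_j}|u-u_Q|\le|Q_j|^{\delta/n}A(Q_j)+|u_{Q_j}-u_Q|$ into a \emph{good} family $\mathcal{A}=\{Q_j:|Q_j|^{\delta/n}A(Q_j)\ge t/2\}$ and a \emph{bad} family $\mathcal{B}=\{Q_j:|u_{Q_j}-u_Q|\ge t/2\}$. For $Q_j\in\mathcal{A}$, the Sobolev identity $1+\delta q/n=q/p$ rearranges the defining inequality into $|Q_j|\le(2/t)^q\bigl(|Q_j|A(Q_j)^p\bigr)^{q/p}$. Summing, then invoking the elementary $\ell^1\hookrightarrow\ell^{q/p}$ embedding (valid since $q\ge p$) together with the bound $\sum_j|Q_j|A(Q_j)^p\le M^p$ from the definition of the seminorm applied to the disjoint family $\{Q_j\}$, yields $\sum_{\mathcal{A}}|Q_j|\le(2M/t)^q$. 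This is the clean Sobolev-scaling step, where the exponent $q$ appears naturally from the rearrangement together with the $\ell^1\hookrightarrow\ell^{q/p}$ inclusion.

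The bad family is the main obstacle. Telescoping along the dyadic chain $Q_j=R_{k_j}^{(j)}\subset\cdots\subset R_0^{(j)}=Q$ gives $t/2\le 2^n\sum_{k=0}^{k_j-1}|R_k^{(j)}|^{\delta/n}A(R_k^{(j)})$; the CZ stopping rule bounds each summand by $2t$, and the geometric decay $|R_k^{(j)}|^{\delta/n}=2^{-\delta k}$ forces some dyadic ancestor $R^{(j)}$ of $Q_j$ to satisfy $A(R^{(j)})\ge C_{n,\delta}\,t$. Collecting the maximal such cubes into a disjoint family $\mathcal{W}$ yields $\bigcup_{\mathcal{B}}Q_j\subseteq\bigcup_{\mathcal{W}}R$, hence $\sum_{\mathcal{B}}|Q_j|\le\sum_{\mathcal{W}}|R|$. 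The direct estimate $M^p\ge\sum_{\mathcal{W}}|R|A(R)^p\ge(C_{n,\delta}t)^p\sum_{\mathcal{W}}|R|$ only yields $(M/t)^p$, which is weaker than the required $(M/t)^q$ in the relevant regime $M<t$; closing this gap is the principal technical difficulty. I would do so by iterating the good/bad dichotomy at the coarser threshold $C_{n,\delta}t$ inside the cubes of $\mathcal{W}$: each such cube either meets the good-cube criterion at its own scale (contributing with the correct $q$-power), or passes the oscillation to a still larger dyadic ancestor, generating a principal-cube stopping tree with Carleson-type geometric decay of the generations. Summing a convergent geometric series of good-cube estimates across the levels then closes the bound.

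Combining the good and bad contributions gives $t^q|\{|u-u_Q|>t\}|\le CM^q$ with $C=C(n,p,\delta)$, as required. The delicate coupling between the triangle inequality at the bad cubes and the $\ell^1\hookrightarrow\ell^{q/p}$ collapse at the good cubes, together with the fact that the Sobolev exponents are exactly matched with the dyadic telescoping, is what makes the argument close with a constant independent of $u$ and $t$.
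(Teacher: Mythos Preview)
The paper's route is different from yours. Rather than a Calder\'on--Zygmund decomposition, it sets $a(R):=\lvert u\rvert_{A^{\delta,p}_1(R)}\,\lvert R\rvert^{-1/q}$ and checks, in a few lines, the two hypotheses of the Franchi--P\'erez--Wheeden self-improving Poincar\'e theorem \cite[Theorem~7.2(a)]{MR1609261}: the single-cube bound $\lvert R\rvert^{-1}\int_R\lvert u-u_R\rvert\le a(R)$ (immediate from $1-p/q-\delta p/n=0$) and the summability $\sum_{P}a(P)^q\lvert P\rvert\le 2^{q/p}a(Q)^q\lvert Q\rvert$ over any disjoint subfamily of $Q$ (from $\ell^1\hookrightarrow\ell^{q/p}$ after merging near-optimal families $\mathcal{Q}_1(P)$ into one admissible family for $Q$). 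The weak-type conclusion is then read off from that black box. Your plan, by contrast, amounts to reproving the black box by hand in this setting.

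Your treatment of the good family $\mathcal{A}$ is clean and correct, and it is precisely where the Sobolev relation $1+\delta q/n=q/p$ enters. The genuine gap is in the bad family $\mathcal{B}$. You correctly observe that the direct bound on the maximal ancestors $\mathcal{W}$ yields only $\sum_{\mathcal{W}}\lvert R\rvert\lesssim (M/t)^p$, and you propose to recover the missing exponent by iteration; but the iteration is never actually carried out, and the direction you describe---``passing the oscillation to a still larger dyadic ancestor''---is problematic. The cubes of $\mathcal{W}$ are \emph{already} ancestors of the bad CZ cubes; climbing further terminates at $Q$, which for $t>M/C_{n,\delta}$ fails the selection criterion $A(Q)\ge C_{n,\delta}t$, so no contraction in measure is produced this way. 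What actually closes such an argument is the summability $\sum_R\lvert u\rvert_{A^{\delta,p}_1(R)}^q\le C\,\lvert u\rvert_{A^{\delta,p}_1(Q)}^q$ over disjoint $R\subset Q$---exactly the paper's inequality \eqref{a_control}---fed into an honest self-improvement recursion of the CZ decomposition; this is the substance of the Franchi--P\'erez--Wheeden proof you would be reproducing. Until that step is written out, the bad-family estimate, and hence the lemma, does not close.
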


\begin{proof}
Let us fix $u\in L^1_{\textup{loc}}(\R^n)$. We write
for cubes $Q$ in $\R^n$
\begin{align*}
a(Q) &= \lvert u\rvert_{A^{\delta,p}_1(Q)}\cdot \lvert Q\rvert^{-1/q} \\&= \bigg\{ \lvert Q\rvert^{-p/q} \cdot \sup_{\mathcal{Q}_1(Q)} \sum_{R\in \mathcal{Q}_1(Q)} 
\lvert R\rvert^{1-\delta p/n} \bigg(\frac{1}{\lvert R\rvert}Ê\int_R \lvert u(x)-u_R\rvert\,dx\bigg)^p \bigg\}^{1/p}\,.
\end{align*}
Inequality \eqref{assertion} follows from
the generalized Poincar\'e inequality theorem \cite[Theorem 7.2(a)]{MR1609261} as soon as we prove inequalities \eqref{f_control} and \eqref{a_control}.
The inequality 
\begin{equation}\label{f_control}
\frac{1}{\lvert Q\rvert}\int_Q \lvert u(x)-u_Q\rvert\,dx \le a(Q)
\end{equation}
holds for every cube $Q$ in $\R^n$. Namely,
\begin{align*}
\frac{1}{\lvert Q\rvert}\int_Q \lvert u(x)-u_Q\rvert\,dx &= 
\bigg\{Ê\lvert Q\rvert^{-p/q} \cdot\lvert Q\rvert^{1-\delta p/n} \bigg(\frac{1}{\lvert Q\rvert}\int_Q \lvert u(x)-u_Q\rvert\,dx\bigg)^{p}\bigg\}^{1/p}\\&\le  a(Q)\,,
\end{align*}
because 
$1-p/q-\delta p/n=0$.
We need to show that
the inequality
\begin{equation}\label{a_control}
\sum_{P\in \mathcal{Q}_1(Q)} a(P)^q \lvert P\rvert \le 2^{q/p} a(Q)^q\lvert Q\rvert
\end{equation}
holds for all cubes $Q$ in $\R^n$ and all families $\mathcal{Q}_1(Q)$ of pairwise disjoint cubes inside $Q$.
In order to prove inequality \eqref{a_control} let us 
fix a cube $Q$ and its family $\mathcal{Q}_1(Q)$. 
For each $P\in\mathcal{Q}_1(Q)$ we fix  its family $\mathcal{Q}_1(P)$ such that
\[
\lvert u\rvert_{A^{\delta,p}_1(P)}^p\le 2\sum_{R\in \mathcal{Q}_1(P)} 
\lvert R\rvert^{1-\delta p/n} \bigg(\frac{1}{\lvert R\rvert}Ê\int_R \lvert u(x)-u_R\rvert\,dx\bigg)^p\,.
\]
Since $q/p\ge 1$,
\begin{align*}
\sum_{P\in\mathcal{Q}_1(Q)} a(P)^q \lvert P\rvert 
\le 2^{q/p}\bigg\{ \sum_{P\in\mathcal{Q}_1(Q)}  \sum_{R\in \mathcal{Q}_1(P)} 
\lvert R\rvert^{1-\delta p/n} \bigg(\frac{1}{\lvert R\rvert}Ê\int_R \lvert u(x)-u_R\rvert\,dx\bigg)^p \bigg\}^{q/p}\,.
\end{align*}
 Then writing
 $\mathcal{Q}:=\cup_{P\in\mathcal{Q}_1(Q)}Ê\mathcal{Q}_1(P)$ allows us to estimate
 \begin{align*}
 &\sum_{P\in\mathcal{Q}_1(Q)} a(P)^q \lvert P\rvert 
\\&\le 2^{q/p}\bigg\{ \sum_{R\in \mathcal{Q}} 
\lvert R\rvert^{1-\delta p/n} \bigg(\frac{1}{\lvert R\rvert}Ê\int_R \lvert u(x)-u_R\rvert\,dx\bigg)^p \bigg\}^{q/p}
\le 2^{q/p} a(Q)^q\lvert Q\rvert\,.
 \end{align*}
This implies inequality \eqref{a_control}.
\end{proof}

For a bounded $c$-John domain $D$ we let $\mathcal{W}^\kappa(D)$ be its modified Whitney decomposition
with a fixed $\kappa\geq 1$
such that
$\kappa Q^*=\kappa \tfrac 98 Q\subset D$ for each  $Q\in\mathcal{W}^\kappa(D)$.
This decomposition is obtained  by dividing each  Whitney cube $Q\in\mathcal{W}(D)$
into sufficiently small dyadic subcubes, their number depending on $\kappa$ and $n$ only.
The family of cubes in $\mathcal{W}^\kappa(D)$ with side length  $2^{-j}$, $j\in\mathbf{Z}$,
is written as
$\mathcal{W}_{j}^\kappa(D)$.

Let $Q$ be in $\mathcal{W}^\kappa(D)$.
Let us suppose that we are given a chain $\mathcal{C}(Q)\subset\mathcal{W}^\kappa(D)$ of cubes
\[\mathcal{C}(Q) = (Q_0,\ldots,Q_k)\,,\]
joining a fixed cube $Q_0\in\mathcal{W}^\kappa(D)$ to $Q_k=Q$ such that there exists a constant $C(n,\kappa )$
so that the inequality
\[
\lvert u_{Q^*} - u_{Q_0^*}\rvert  \le C(n,\kappa)\sum_{R\in\mathcal{C}(Q)} \frac{1}{\lvert R^*\rvert}\int_{R^*} \lvert u(x)-u_{R^*}\rvert\,dx
\]
holds whenever
$u\in L^1_{\textup{loc}}(D)$.
The family $\{\mathcal{C}(Q)\,:\,Q\in\mathcal{W}^\kappa(D)\}$ of chains of cubes is called a chain decomposition of $D$.
The shadow of a given cube $Q\in \mathcal{W}^\kappa(D)$ is the family
\[\mathcal{S}(R) = \{ Q\in\mathcal{W}^\kappa(D)\,:\, R\in\mathcal{C}(Q) \}\,.\]

The following key lemma is a straightforward modification of \cite[Proposition 2.5]{H-SMV} once we 
have Lemma \ref{t.equi}.

\begin{lem}\label{chain}
Let $D$ be a bounded $c$-John domain in $\R^n$.  Let $\kappa\ge 1$ and
$1\le q<\infty$ be given. Then 
there exist a chain
decomposition  of $D$ and constants $\sigma, \rho\in\mathbf{N}$ such that
\begin{itemize}
\item[(1)] $\ell(Q)\le 2^{\rho}\ell(R)$ for each $R\in\mathcal{C}(Q)$
  and $Q\in\mathcal{W}^\kappa(D)$,
\item[(2)] $\sharp \{R\in\mathcal{W}^\kappa_j(D):\ R\in\mathcal{C}(Q)\}\le
 2^ \rho$ for each $Q\in \mathcal{W}^\kappa(D)$ and $j\in \mathbf{Z}$,
\item[(3)] the inequality 
\begin{equation*}
\sup_{j\in \mathbf{Z}}\sup_{R\in\mathcal{W}^\kappa_j(D)}
\frac{1}{\lvert R\lvert}
\sum_{k=j-\rho}^\infty \sum_{\substack{Q\in \mathcal{W}^\kappa_k(D) \\ Q\in\mathcal{S}(R) }} \lvert Q\rvert
(\rho+1+k-j)^{q}  < \sigma
\end{equation*}
holds.
\end{itemize}
The constants $\sigma$ and $\rho$ depend 
on $\kappa$, $n$, $q$, and the John constant $c$ only.
\end{lem}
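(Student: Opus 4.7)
The plan is to build $\mathcal{C}(Q)$ by tracking John curves, as in \cite[Proposition 2.5]{H-SMV}, now with Lemma \ref{t.equi} providing the curves in the unbounded setting. Fix a Whitney cube $Q_0 \in \mathcal{W}^\kappa(D)$ containing the central point $x_0$ of Lemma \ref{t.equi}. For each $Q \in \mathcal{W}^\kappa(D)$, pick a point $x_Q \in Q$ (say its center) and an arc-length parametrized curve $\gamma_Q : [0,\ell_Q] \to D$ joining $x_Q$ to $x_0$ and satisfying $\dist(\gamma_Q(t),\partial D) \ge t/(4c^2)$. Then $\mathcal{C}(Q)$ is the finite sequence of distinct Whitney cubes visited by $\gamma_Q$, listed in reverse parameter order so that $Q_0$ comes first and $Q$ comes last.

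For the Poincar\'e telescoping bound, any two consecutive cubes $R, R' \in \mathcal{C}(Q)$ share a common point of $\gamma_Q$, and the Whitney property forces $\ell(R)$ and $\ell(R')$ to be comparable. Consequently $R^* \cap R'^*$ contains a ball of measure comparable to $\lvert R^*\rvert$ and $\lvert R'^*\rvert$, which yields the standard estimate
\[
\lvert u_{R^*} - u_{R'^*}\rvert \le C(n,\kappa)\Bigl( \lvert R^*\rvert^{-1}\!\int_{R^*}\lvert u(x) - u_{R^*}\rvert\,dx + \lvert R'^*\rvert^{-1}\!\int_{R'^*}\lvert u(x) - u_{R'^*}\rvert\,dx\Bigr),
\]
and telescoping along the chain produces the required oscillation inequality.

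Properties (1) and (2) follow from the Whitney comparison $\dist(\gamma_Q(t),\partial D) \asymp \ell(R)$ whenever $\gamma_Q(t) \in R$, combined with the John lower bound $\dist(\gamma_Q(t),\partial D) \ge t/(4c^2)$. These force $\ell(R) \ge 2^{-\rho}\ell(Q)$ for every $R \in \mathcal{C}(Q)$ with some $\rho = \rho(n,c)$, which is (1). They also bound the total arc length that $\gamma_Q$ can spend inside cubes of a fixed side length $2^{-k}$ by $C(n,c)\,2^{-k}$, so the number of such cubes along the chain is bounded uniformly in $k$, giving (2).

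The main technical obstacle is the weighted shadow estimate (3). Fix $R \in \mathcal{W}^\kappa_j(D)$ and $k \ge j-\rho$. If $Q \in \mathcal{S}(R) \cap \mathcal{W}^\kappa_k(D)$, then $\gamma_Q$ meets $R$ at some parameter $t_0$, and the John bound gives $t_0 \le C(n,c)\ell(R)$, so $x_Q$ lies in a ball of radius $C(n,c)\ell(R)$ centered at a point of $R$. Hence each such $Q$ is contained in a fixed dilate of $R$. A Whitney-cube counting argument, using that generation-$k$ cubes concentrate in a shell of width comparable to $2^{-k}$ along $\partial D$, then yields a geometric bound
\[
\sum_{Q \in \mathcal{S}(R) \cap \mathcal{W}^\kappa_k(D)} \lvert Q\rvert \le C(n,\kappa,c)\, 2^{-(k-j)}\lvert R\rvert.
\]
Summing against the polynomial weight $(\rho+1+k-j)^q$ over $k \ge j-\rho$ produces a convergent series bounded by $\sigma \lvert R\rvert$ with $\sigma = \sigma(n,\kappa,q,c)$, establishing (3). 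All constants are tracked to depend only on $n$, $\kappa$, $q$, and the John constant $c$, as required.
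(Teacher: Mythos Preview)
Your approach is precisely the one the paper points to: the paper does not give a proof but refers to \cite[Proposition~2.5]{H-SMV}, with Lemma~\ref{t.equi} supplying the John curves to a central point. One small slip: Lemma~\ref{t.equi}, and the present lemma, are for \emph{bounded} $c$-John domains, not ``the unbounded setting''; the passage to unbounded domains happens only later via the engulfing Lemma~\ref{t.engulfing}. The telescoping inequality and properties (1) and (2) are standard and your outline is fine.

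The genuine gap is in your argument for (3). The claimed per-generation decay
\[
\sum_{Q\in\mathcal{S}(R)\cap\mathcal{W}^\kappa_k(D)}\lvert Q\rvert \le C(n,\kappa,c)\,2^{-(k-j)}\lvert R\rvert
\]
is false for $c$-John domains whose boundary has dimension strictly larger than $n-1$. For the Koch snowflake in $\R^2$, which is a John domain with $\overline{\dim}_A(\partial D)=\log 4/\log 3$, the total measure of generation-$k$ Whitney cubes inside a ball of radius $2^{-j}$ decays only like $2^{-(k-j)(2-\log 4/\log 3)}$, not $2^{-(k-j)}$. Your ``shell of width comparable to $2^{-k}$'' heuristic tacitly assumes an $(n-1)$-rectifiable boundary, which is not available in general. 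What \emph{is} true, and still suffices for (3), is exponential decay with some rate $\alpha=\alpha(n,c)>0$: the $c$-John condition implies plumpness, hence $\partial D$ is porous with constant depending only on $n,c$, hence $\overline{\dim}_A(\partial D)\le n-\alpha$, and this gives the Minkowski-type bound
\[
\bigl\lvert\{x\in B(z_R,C\ell(R)):\dist(x,\partial D)\le C'2^{-k}\}\bigr\rvert \le C\,2^{-\alpha(k-j)}\lvert R\rvert.
\]
With this corrected decay your final summation against the polynomial weight $(\rho+1+k-j)^q$ converges exactly as you wrote, and the constants depend only on $n,\kappa,q,c$.
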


We are ready for the proof of Theorem \ref{suff_thm}.

\begin{proof}[Proof of Theorem \ref{suff_thm}]
Let us denote $q=np/(n-\delta p)$.
We need to show that 
there is a constant
$C(n,\kappa,p,\delta,c)$ such that
the inequality
\[
\inf_{a\in\R}\sup_{t>0} \lvert \{x\in D:\,|u(x)-a|>t\}\rvert t^q 
\le C(n,\kappa,p,\delta,c) \lvert u\rvert_{A^{\delta,p}_\kappa(D)}^q
\]
holds
for each $u\in L^1(D)$.
Let $Q_0$ be the fixed cube in the chain decomposition of $D$ given by Lemma \ref{chain}.
By the triangle inequality we obtain
\begin{align*}
  \lvert u(x)-u_{Q_0^*}\rvert &\le   \left|u(x)-\sum_{Q\in\mathcal{W}^\kappa(D)} u_{Q^*}\chi_{Q}(x)\right|+
  \left|\sum_{Q\in\mathcal{W}^\kappa(D)} u_{Q^*} \chi_Q(x) - u_{Q_0^*}\right|
\end{align*}
for almost every $x\in D$.
We write
\[
\left|u(x)-\sum_{Q\in\mathcal{W}^\kappa(D)} u_{Q^*}\chi_{Q}(x)\right|
 =: g_1(x) 
\]
and
\[
\left|\sum_{Q\in\mathcal{W}^\kappa(D)} u_{Q^*} \chi_Q(x) - u_{Q_0^*}\right|
 =: g_2(x) 
\]
for $x\in D$.
For a fixed $t>0$ we estimate
\begin{align*}
&  t^q \lvert \{x\in D:\ \lvert
  u(x)-u_{Q_0^*}\rvert>t\}\rvert \\&\qquad\le  
  t^q\left\lvert \{x\in D:\ g_1(x)>t/2\}\right\lvert + 
  t^q\left\lvert \{x\in D:\ g_2(x)>t/2\}\right\lvert \,. 
\end{align*}
The local term $g_1$ is estimated by Lemma \ref{cube_lem} and the inequality $p\le q$:
\begin{align*}
  t^q\left\lvert \{x\in D:\ g_1(x)>t/2\}\right\lvert  & = \sum_{Q\in\mathcal{W}^\kappa(D)} t^q \lvert
  \{x\in \textup{int}(Q)\,:\, \lvert u(x)-u_{Q^*}\rvert >t/2\}\rvert 
\\& 
\le 
C 2^q\bigg(\sum_{Q\in\mathcal{W}^\kappa(D)} \lvert u\rvert_{A^{\delta,p}_1(Q^*)}^p\bigg)^{q/p}\,.
\end{align*}
Let us note that $\kappa R \subset \kappa Q^*\subset D$ if $R\in \mathcal{Q}_1(Q^*)$ and $Q\in\mathcal{W}^\kappa(D)$. 
We divide the family $\{Q^*\,:\,Q\in\mathcal{W}^\kappa(D)\}$ of cubes into
$C(n,\kappa)$ families so that each of them consists of pairwise disjoint cubes. As in the proof of 
Lemma \ref{cube_lem} we obtain
\[
 t^q \lvert \{x\in D: \ g_1(x)>t/2\}\rvert \le C\lvert u\rvert^q_{A^{\delta,p}_\kappa(D)}\,.
\]

We start to estimate the chaining term $g_2$:
\begin{align*}
   t^q\left\lvert \{x\in D:\ g_2(x)>t/2\}\right\lvert  & = t^q\sum_{Q\in\mathcal{W}^\kappa(D)} \lvert
  \{ x\in \textup{int}(Q)\,:\, \lvert u_{Q^*}-u_{Q_0^*}\rvert >t/ 2\}\rvert \\ &\le
 2^q \sum_{Q\in\mathcal{W}^\kappa(D)} \lvert Q\rvert\lvert
  u_{Q^*}-u_{Q_0^*}\rvert^q=:\Sigma\,.
\end{align*}
By property (1) of the chain decomposition in Lemma \ref{chain} we obtain
\begin{align*}
\Sigma &\le C\sum_{k=-\infty}^\infty \sum_{Q\in
    \mathcal{W}^\kappa_k(D)} \lvert Q\rvert\Bigg(\sum_{j=-\infty}^{k+\rho}
\underbrace{    \sum_{\substack{R\in\mathcal{W}^\kappa_j(D) \\ R\in \mathcal{C}(Q)}}
    \frac{1}{\lvert R^*\rvert}\int_{R^*} \lvert u(x)-u_{R^*}\rvert\,dx}_{=:\Sigma_{j,Q}}
  \Bigg)^q\\
&=C\sum_{k=-\infty}^\infty \sum_{Q\in
    \mathcal{W}^\kappa_k(D)} \lvert Q\rvert \Bigg(\sum_{j=-\infty}^{k+\rho} 
    \underbrace{(\rho+1+k-j)^{-1}(\rho+1+k-j)}_{=1}\Sigma_{j,Q}\Bigg)^q\,.
\end{align*}
Property (2)  in Lemma \ref{chain} and the equation $1/p-1/q=\delta/n$ give
 \begin{align*}
 \Sigma_{j,Q}^q & = \Bigg(\sum_{\substack{R\in\mathcal{W}^\kappa_j(D) \\ R\in \mathcal{C}(Q)}}
    \frac{1}{\lvert R^*\rvert}\int_{R^*} \lvert u(x)-u_{R^*}\rvert\,dx\Bigg)^q\\
&   \le C\sum_{\substack{R\in\mathcal{W}^\kappa_j(D) \\ R\in \mathcal{C}(Q)}}
\Bigg(    \frac{1}{\lvert R^*\rvert}\int_{R^*} \lvert u(x)-u_{R^*}\rvert\,dx\Bigg)^q
\le C\sum_{\substack{R\in\mathcal{W}^\kappa_j(D) \\ R\in \mathcal{C}(Q)}} 
\frac{\lvert u\rvert_{A^{\delta,p}_1(R^*)}^{q}}{\lvert R^*\rvert}\,.
 \end{align*}
Thus, H\"older's inequality and property (3) in Lemma \ref{chain} imply that
\begin{align*}
\Sigma& \leq C\sum_{k=-\infty}^\infty \sum_{Q\in
    \mathcal{W}^\kappa_k(D)} \lvert Q\rvert\sum_{j=-\infty}^{k+\rho} (\rho+1+k-j)^q
  \sum_{\substack{R\in\mathcal{W}^\kappa_j(D) \\ R\in
      \mathcal{C}(Q)}}\frac{\lvert u\rvert_{A_1^{\delta,p}(R^*)}^{q}}{\lvert
    R^*\rvert} \\
  & = C\sum_{j=-\infty}^\infty \sum_{R\in \mathcal{W}^\kappa_j(D)}
  \lvert u\rvert_{A_1^{\delta,p}(R^*)}^{q} \cdot \frac{1}{\lvert R\rvert} \sum_{k=j-\rho}^\infty
  \sum_{\substack{Q\in\mathcal{W}^\kappa_k(D) \\ Q\in \mathcal{S}(R)}} 
  \lvert Q\rvert(\rho+1+k-j)^q  \\
  & 
    \leq C\bigg(\sum_{j=-\infty}^\infty \sum_{R\in \mathcal{W}^\kappa_j(D)}
 \lvert u\rvert_{A^{\delta,p}_1(R^*)}^{p}\bigg)^{q/p} \leq C\lvert u\rvert_{A^{\delta,p}_\kappa(D)}^{q}\,.
\end{align*}
The theorem is proved.
\end{proof}

\section{Sobolev--Poincar\'e inequalities in unbounded John domains}\label{s.fractional}

We prove a fractional Sobolev--Poincar\'e inequality in unbounded John domains.

\begin{thm}\label{t.main}
Suppose that $D$ in $\R^n$ is an unbounded  $c$-John domain and that
$\tau,\delta\in (0,1)$ are given. Let $1\le p< n/\delta$. Then
there is a constant $C=C(\delta,\tau,p,n,c)>0$ such that
the fractional Sobolev--Poincar\'e inequality 
\begin{equation*}
\inf_{a\in\R}\int_D\vert u(x)-a\vert ^{np/(n-\delta p)}\,dx
\le
C
\lvert u \rvert_{W^{\delta,p}_\tau(D)}^{np/(n-\delta p)}
\end{equation*}
holds for each $u\in \dot{W}^{\delta,p}_\tau(D)$.
\end{thm}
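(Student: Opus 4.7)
The plan is to exhaust the unbounded $c$-John domain $D$ by an increasing sequence of bounded John domains, apply Theorem \ref{t.sub} with a uniform constant on each piece, and pass to the limit by Fatou's lemma. Write $q:=np/(n-\delta p)$ and $M:=\lvert u\rvert_{W^{\delta,p}_\tau(D)}$; the latter is finite by hypothesis.

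By Lemma \ref{t.engulfing} write $D=\bigcup_{i=1}^\infty D_i$, where each $D_i$ is a bounded $c_1$-John domain with $c_1=c_1(c,n)$ and $\overline{D_i}$ is compact in $D_{i+1}$. Since $\overline{D_i}$ is a compact subset of $D$, Lemma \ref{l.integrability} gives $u\in L^p(\overline{D_i})\subset L^1(D_i)$. The inclusion $D_i\subset D$ yields $\dist(x,\partial D_i)\le \dist(x,\partial D)$ for every $x\in D_i$, and hence the monotonicity $\lvert u\rvert_{W^{\delta,p}_\tau(D_i)}\le M$. Set $a_i:=u_{D_i}$. Theorem \ref{t.sub} applied to $D_i$ then gives
\[
\int_{D_i}\lvert u(x)-a_i\rvert^q\,dx \le C M^{q},
\]
with a constant $C=C(\delta,\tau,p,n,c_1)=C(\delta,\tau,p,n,c)$ independent of $i$.

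The principal obstacle is to show that $\{a_i\}\subset \R$ is bounded so that it admits a convergent subsequence. Fix any closed ball $B\subset D$ with $\lvert B\rvert>0$; since $\{D_i\}$ is increasing with $\bigcup_i D_i=D$, one has $B\subset D_i$ for all large $i$, and the uniform bound forces $\lVert u-a_i\rVert_{L^q(B)}\le (CM^q)^{1/q}$ for such $i$. The triangle inequality
\[
\lvert a_i-a_j\rvert\,\lvert B\rvert^{1/q}=\lVert a_i-a_j\rVert_{L^q(B)}\le \lVert u-a_i\rVert_{L^q(B)}+\lVert u-a_j\rVert_{L^q(B)}\le 2(CM^q)^{1/q}
\]
then confines $\{a_i\}$ to a bounded set. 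Pass to a subsequence $a_{i_k}\to a\in\R$.

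For almost every $x\in D$ we have $x\in D_{i_k}$ for all $k\ge k_0$, so $\lvert u(x)-a_{i_k}\rvert^q\chi_{D_{i_k}}(x)\to \lvert u(x)-a\rvert^q$ pointwise. Fatou's lemma concludes
\[
\int_D \lvert u(x)-a\rvert^q\,dx\le \liminf_{k\to\infty}\int_{D_{i_k}}\lvert u(x)-a_{i_k}\rvert^q\,dx\le CM^{q},
\]
which yields the theorem after the trivial step of taking the infimum over $a\in\R$. This approach avoids any need to redo the chaining argument of Theorem \ref{iso} in the unbounded setting; the only subtle point is the extraction of a convergent shift $a$, which is handled entirely by local $L^q$-control on the fixed ball $B$.
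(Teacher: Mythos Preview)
Your proof is correct and follows essentially the same approach as the paper: exhaust $D$ by bounded $c_1$-John subdomains via Lemma~\ref{t.engulfing}, apply Theorem~\ref{t.sub} uniformly on each $D_i$ (using the monotonicity $\lvert u\rvert_{W^{\delta,p}_\tau(D_i)}\le \lvert u\rvert_{W^{\delta,p}_\tau(D)}$), extract a convergent subsequence of the averages, and conclude by Fatou's lemma. The only cosmetic difference is that you show boundedness of $(a_i)$ by comparing pairs $a_i,a_j$ on a fixed ball, whereas the paper bounds each $\lvert u_i\rvert$ directly via $D_1$; both arguments are equivalent in spirit and difficulty.
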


The proof is similar to the proof of \cite[Theorem 4.1]{MR1190332}
where the classical Sobolev--Poincar\'e inequality has been proved in unbounded domains which have an engulfing
property. The proof is based on an idea from \cite{MR802488}.

\begin{proof}[Proof of Theorem \ref{t.main}]
By Lemma \ref{t.engulfing} the  $c$-John domain $D$ has an engulfing property.
That is, there are bounded $c_1$-John domains $D_i$  with $c_1=c_1(c,n)$ such that
\begin{equation*}
D_i\subset \overline{D_i}\subset D_{i+1}\,,\qquad  i=1,2,\dots\,,
\end{equation*}
and
\[
D=\bigcup_{i=1}^{\infty}D_i\,.
\]
Let us fix $u\in \dot{W}^{\delta,p}_\tau (D)$. By Lemma \ref{l.integrability}
with $K=\overline{D_i}$ we obtain that $u\in L^p(D_i)$ and hence $u\in  L^1(D_i)$ for each $i$. Therefore we may write
\begin{equation*}
u_i=u_{D_i}=\frac{1}{\vert D_i\vert}\int_{D_i}u(x)\,dx\,,\qquad  i=1,2,\dots.
\end{equation*}
The sequence $(u_i)$ is bounded.
Namely,
by the triangle inequality
\begin{equation*}
\vert u_i\vert = \frac{1}{\vert D_1\vert}\int_{D_1}\vert u_i\vert\,dx
\le \frac{1}{\vert D_1\vert}\biggl(
\int_{D_1}\vert u(x)-u_i\vert\,dx +\int_{D_1}\vert u(x)\vert \,dx\biggl)\,.
\end{equation*}
By H\"older's inequality with exponents $(np/(np-n+\delta p), np/(n-\delta p))$ and
by Theorem \ref{t.sub}  applied in a bounded $c_1$-John domain
$D_i$ we obtain
\begin{align*}
\int_{D_1}\vert u(x)-u_i\vert \,dx
&\le
\vert D_1\vert^{1-1/p+\delta /n}
\vert\vert u-u_{D_i}\vert\vert_{L^{np/(n-\delta p)}(D_1)}\\
&
\le
\vert D_1\vert^{1-1/p+\delta /n}
\vert\vert u-u_{D_i}\vert\vert_{L^{np/(n-\delta p)}(D_i)}
\le\vert D_1\vert^{1-1/p+\delta /n}
C
\lvert u \rvert_{W^{\delta,p}_\tau(D)}<\infty
\end{align*}
with a constant $C=C(\delta,\tau,p,n,c_1)$.

The bounded sequence $(u_i)$ has a convergent subsequence $(u_{i_j})$ and hence there is
a constant 
$a\in\R$ such that $\lim_{j\to\infty} u_{i_j}=a$.
By Fatou's lemma
and  Theorem \ref{t.sub} applied with the function $u\in L^p(D_j)$ we obtain
\begin{align*}
\int_D\vert u(x)- a\vert ^{np/(n-\delta p)}\,dx
&=\int_D\liminf_{j\to\infty}\chi_{D_{i_j}}(x)\vert u(x)-u_{i_j}\vert^{np/(n-\delta p)}\,dx\\
&\le\liminf_{j\to\infty}\int_{D_{i_j}}\vert u(x)-u_{i_j}\vert^{np/(n-\delta p)}\,dx\\
&\le C\liminf_{j\to\infty}\lvert u \rvert_{W^{\delta,p}_\tau(D_{i_j})}^{np/(n-\delta p)}
\le C\lvert u \rvert_{W^{\delta,p}_\tau(D)}^{np/(n-\delta p)}\,,
\end{align*}
where $C=C(\delta,\tau,p,n,c_1)$. The claim follows.
\end{proof}

A fractional Sobolev inequality holds in unbounded John domains.

\begin{thm}\label{t.main_emb}
Suppose that $D$ in $\R^n$ is an  unbounded $c$-John domain and that
$\tau,\delta\in (0,1)$ are given. Let $1\le p< n/\delta$. Then
there is a constant $C=C(\delta,\tau,p,n,c)>0$ such that
the fractional Sobolev inequality
\begin{equation}\label{frac_Sobolev_inequality}
\int_D\vert u(x)\vert ^{np/(n-\delta p)}\,dx
\le
C
\lvert u \rvert_{W^{\delta,p}_\tau(D)}^{np/(n-\delta p)}
\end{equation}
holds for each $u\in \dot{W}^{\delta,p}_\tau(D)$ with 
a compact support in $D$.
\end{thm}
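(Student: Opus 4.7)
My plan is to derive Theorem \ref{t.main_emb} directly from Theorem \ref{t.main} by checking that for a compactly supported $u$ the optimal additive constant in the Sobolev--Poincar\'e inequality can only be zero.

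The preliminary geometric fact I need is that every unbounded $c$-John domain $D$ has infinite Lebesgue measure. To verify it, I would fix $x_0 \in D$ and, for arbitrarily large $R > 0$, choose $x_1 \in D$ with $\lvert x_1 - x_0\rvert > R$. Applying Definition \ref{sjohn} to the pair $(x_0,x_1)$ yields a John curve $\gamma : [0,\ell] \to D$ of length $\ell \ge R$ whose midpoint satisfies $\dist(\gamma(\ell/2), \partial D) \ge \ell/(2c) \ge R/(2c)$. Hence $D$ contains balls of arbitrarily large radius, so $\lvert D \rvert = \infty$, and consequently $\lvert D \setminus K \rvert = \infty$ for every compact $K \subset D$.

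With this in hand, let $u \in \dot{W}^{\delta,p}_\tau(D)$ have compact support $K$ in $D$, and write $q = np/(n-\delta p)$. Theorem \ref{t.main} supplies a constant $C = C(\delta,\tau,p,n,c)$ for which
\[
\inf_{a \in \R} \int_D \lvert u(x) - a \rvert^q \, dx \le C \lvert u \rvert^q_{W^{\delta,p}_\tau(D)} < \infty.
\]
Since $u$ vanishes off $K$, for every $a \ne 0$ one has
\[
\int_D \lvert u(x) - a \rvert^q \, dx \ge \int_{D \setminus K} \lvert a \rvert^q \, dx = \lvert a \rvert^q \cdot \lvert D \setminus K \rvert = \infty.
\]
The only admissible choice is therefore $a = 0$, and substituting it yields \eqref{frac_Sobolev_inequality}.

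I do not foresee any substantial obstacle: the whole proof is a two-line deduction from Theorem \ref{t.main}, once one records the infinite-measure observation for unbounded John domains that forces the best approximation constant to vanish on compactly supported test functions.
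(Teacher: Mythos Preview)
Your argument is correct and rests on the same underlying observation as the paper's: in an unbounded John domain $\lvert D\rvert=\infty$, so for a compactly supported $u$ the constant $a$ in Theorem~\ref{t.main} must be zero. The execution differs slightly. The paper re-opens the proof of Theorem~\ref{t.main}, uses the exhaustion $D=\bigcup_i D_i$ together with Lemma~\ref{l.integrability} to show that the averages $u_{D_i}\to 0$, and hence that the limit constant produced there is $a=0$. You instead invoke Theorem~\ref{t.main} as a black box and observe directly that any $a\neq 0$ makes $\int_D\lvert u-a\rvert^q=\infty$, so the infimum is attained at $a=0$. Your route is marginally more self-contained (it avoids revisiting the exhaustion and the local integrability lemma), while the paper's version identifies the constant constructively; both yield the same constant $C$ inherited from Theorem~\ref{t.main}.
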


\begin{proof}
We write $D=\cup_{i=1}^\infty  D_i$ as in the proof of Theorem \ref{t.main}.
By Lemma \ref{l.integrability} and the fact that the numbers $\lvert D_i\rvert$ converge to $ \lvert D\rvert=\infty$ as $i\to \infty$ we obtain that
\[
\lvert u_i\rvert= \lvert u_{D_i}\rvert \le \bigg( \frac{1}{\lvert D_i\rvert}\int_{D_i}Ê\lvert u(x)\rvert^p\,dx\bigg)^{1/p}\le \lvert D_i\rvert^{-1/p}Ê\lVert u\rVert_{L^p(D)}\xrightarrow{i\to \infty}Ê0
\]
for $u\in \dot{W}^{\delta,p}_\tau(D)$ with a compact
support in $D$.
Therefore, the  proof follows as the proof of Theorem \ref{t.main} with 
 $a=0$.
\end{proof}

The following is a corollary of Theorem \ref{t.main} and Theorem \ref{t.main_emb}.  
It shows that
$\dot W^{\delta,p}_\tau(D)$ is embedded to $L^{np/(n-\delta p)}(D)$ if we identify any two functions in $\dot W^{\delta,p}_\tau(D)$ whose difference
is a constant almost everywhere. This identification is usually included already in  the definition of homogeneous  spaces of smoothness  $0<\delta <1$.


\begin{cor}\label{embed}
Suppose that $D$ in $\R^n$ is a  $c$-John domain and that
$\tau,\delta\in (0,1)$ are given. Let $1\le p< n/\delta$ and $q=np/(n-\delta p)$. Then
there is a nonlinear bounded operator 
\[
\mathbf{E}:\dot W^{\delta,p}_{\tau}(D)\to  L^q(D)\,,\qquad \mathbf{E}(u)=  u-a_u\,,
\]
whose norm is bounded by a constant $C=C(\delta,\tau,p,n,c)$; here $a_u\in \R$ for each $u\in \dot W^{\delta,p}_{\tau}(D)$.
If $D$ is an unbounded $c$-John domain, then $\mathbf{E}(u)=u$
for each $u\in \dot W^{\delta,p}_{\tau}(D)$ whose support is a compact set in $D$.
\end{cor}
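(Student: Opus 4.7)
The strategy is to produce, for every $u\in\dot{W}^{\delta,p}_\tau(D)$, a real number $a_u$ so that $u-a_u\in L^q(D)$ with norm controlled by $\lvert u\rvert_{W^{\delta,p}_\tau(D)}$, and then to define $\mathbf{E}(u):=u-a_u$. Since no linearity or continuity of $\mathbf{E}$ is demanded, any admissible choice of $a_u$ will suffice. The plan is to run the engulfing argument from the proof of Theorem \ref{t.main} uniformly, so that a single construction handles bounded and unbounded $c$-John domains simultaneously.

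In detail, I invoke Lemma \ref{t.engulfing} to decompose $D=\bigcup_{i=1}^\infty D_i$ into bounded $c_1$-John subdomains with $c_1=c_1(c,n)$. For each $i$, the set $\overline{D_i}$ is compact in $D$ by Lemma \ref{t.engulfing}(1), so Lemma \ref{l.integrability} delivers $u\in L^1(D_i)$ and the average $u_i:=u_{D_i}$ is well defined. Applying Theorem \ref{t.sub} on each bounded $c_1$-John domain $D_i$ yields
\[
\lVert u-u_i\rVert_{L^q(D_i)}^q\le C\lvert u\rvert_{W^{\delta,p}_\tau(D_i)}^q\le C\lvert u\rvert_{W^{\delta,p}_\tau(D)}^q
\]
with $C=C(\delta,\tau,p,n,c_1)$. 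Exactly as in the proof of Theorem \ref{t.main}, H\"older's inequality on the fixed subdomain $D_1$ together with this bound shows that $(u_i)\subset\R$ is bounded, so a subsequence $u_{i_j}\to a_u\in\R$ converges. Fatou's lemma applied to the pointwise limit $\chi_{D_{i_j}}(x)\lvert u(x)-u_{i_j}\rvert^q\to\lvert u(x)-a_u\rvert^q$ then gives $\int_D\lvert u-a_u\rvert^q\,dx\le C\lvert u\rvert_{W^{\delta,p}_\tau(D)}^q$, so $\mathbf{E}(u):=u-a_u$ is the required map.

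For the final claim, suppose $D$ is unbounded and $u\in\dot{W}^{\delta,p}_\tau(D)$ has compact support. Then Lemma \ref{l.integrability} gives $u\in L^p(D)$, and since $\lvert D_i\rvert\nearrow\lvert D\rvert=\infty$,
\[
\lvert u_i\rvert\le \lvert D_i\rvert^{-1/p}\lVert u\rVert_{L^p(D)}\xrightarrow{i\to\infty}0,
\]
forcing $a_u=0$ and hence $\mathbf{E}(u)=u$; alternatively, one can cite Theorem \ref{t.main_emb} directly. The main obstacle is really only one of bookkeeping: one must check that the constant in Theorem \ref{t.sub} applied on $D_i$ depends only on $c_1=c_1(c,n)$ rather than on $i$ or on whether $D$ itself is bounded, so that passing to the Fatou limit loses no control. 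This is precisely what the explicit constant dependency of Theorem \ref{t.sub} affords, which is why that theorem was phrased with the dependencies of its constant tracked.
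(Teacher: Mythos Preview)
Your proof is correct and follows essentially the same route as the paper, which simply records the corollary as a consequence of Theorems \ref{t.main} and \ref{t.main_emb}: you re-run the engulfing/Fatou argument from those proofs and then specialize to the compactly supported case exactly as in Theorem \ref{t.main_emb}. Your observation that Lemma \ref{t.engulfing} applies to bounded and unbounded $c$-John domains alike, so that one construction handles both cases, is a clean way to treat the bounded case without separately invoking Theorem \ref{t.sub} (which would require $u\in L^1(D)$ up front).
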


\section{Fractional Hardy inequalities in unbounded John domains}

We characterize certain fractional  Hardy inequalities in unbounded
John domains 
as an application of   Theorem \ref{t.main_emb}. 
The following definition is motivated by the fractional Hardy inequalities from \cite{EH-SV}.
The classical $(q,p)$-Hardy inequalities are studied in
\cite{EH-S}.

We say that a fractional $(\delta,q,p)$-Hardy
inequality with $0<\delta<1$ and $0< p,q<\infty$ holds in a proper open set $G$ in $\R^n$, if there is a
constant $C>0$ such that
the inequality
\begin{equation}\label{e.hardy}
\int_{G} \frac{\lvert u(x)\rvert^q}{\dist(x,\partial G)^{q(\delta+n(1/q-1/p))}}\,dx
\le  C\bigg(\int_{G} \int_{G}
\frac{\lvert u(x)-u(y)\rvert ^p}{\lvert x-y\rvert ^{n+\delta p}}\,dy\,dx\bigg)^{q/p}
\end{equation}
holds for all functions $u\in C_0(G)$. The fractional Sobolev inequality \eqref{frac_Sobolev_inequality} is obtained when $1/p-1/q=\delta/n$.
The usual fractional $(\delta,p,p)$-Hardy inequality is obtained when $q=p$.

Our characterization of fractional Hardy inequalities is given in terms of Whitney cubes from $\mathcal{W}(G)$ 
and
the  $(\delta,p)$-capacities
\[
\mathrm{cap}_{\delta,p}(K,G) = \inf_u \lvert u\rvert_{W^{\delta,p}(G)}^p
\]
of compact sets $K$ in $G$,
where the infimum is taken over all  $u\in C_0(G)$ such that $u(x)\ge 1$ for each point $x\in K$.

\begin{thm}\label{t.hardy_c}
Let $D$ be an unbounded $c$-John domain in $\R^n$,  $D\not=\R^n$. Let
$\delta\in (0,1)$ and $1\le p,q<\infty$ be given such that  $p<n/\delta$ and $0\le 1/p-1/q\le \delta/n$.  Then the following conditions are
equivalent.
\begin{itemize}
\item[(A)]  A fractional  $(\delta,q,p)$-Hardy inequality holds in $D$.
\item[(B)]
There exists a positive constant $N>0$ such that  inequality
\begin{equation*}
\bigg(\sum_{Q\in\mathcal{W}(D)} \mathrm{cap}_{\delta,p}(K\cap Q,D)^{q/p}\bigg)^{p/q} \le N\,\mathrm{cap}_{\delta,p}(K,D)
\end{equation*}
holds for every compact set $K$ in $D$.
\end{itemize}
\end{thm}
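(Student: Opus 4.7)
The plan is to prove the equivalence $(A)\Leftrightarrow(B)$ by treating each implication separately; both directions lean on the fractional Sobolev inequality provided by Theorem~\ref{t.main_emb}. Throughout set $\alpha:=q(\delta+n(1/q-1/p))$, so that $\alpha\in[0,q\delta]$ and $\alpha/n=1-p/q+\delta p/n$, and observe that $q\ge p$ since $1/p-1/q\ge 0$.

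For $(B)\Rightarrow(A)$, I would adapt the Maz'ya--Dyda truncation method. Given $u\in C_0(D)$, introduce the dyadic superlevel sets $\Omega_k:=\{x\in D:\lvert u(x)\rvert>2^k\}$ with compact closures $K_k\subset D$, and the truncations $u_k:=\min(2^k,(\lvert u\rvert-2^k)_+)$ for $k\in\Z$. Since $u_{k-1}/2^{k-1}\ge 1$ on $K_k$ we have $\mathrm{cap}_{\delta,p}(K_k,D)\le 2^{-(k-1)p}\lvert u_{k-1}\rvert_{W^{\delta,p}(D)}^p$. Theorem~\ref{t.main_emb} applied to any admissible test function gives the Sobolev lower bound $\mathrm{cap}_{\delta,p}(E,D)\gtrsim\lvert E\rvert^{(n-\delta p)/n}$, which together with $\lvert E\rvert\le\ell(Q)^n$ for $E\subset Q\in\mathcal{W}(D)$ yields the cubewise inequality
\[
\ell(Q)^{-\alpha}\lvert E\rvert\lesssim\mathrm{cap}_{\delta,p}(E,D)^{q/p}.
\]
Decomposing the Hardy integral via the Whitney cubes (with $\dist(x,\partial D)\approx\ell(Q)$ on $Q$) and the dyadic level sets, then chaining this estimate with $(B)$ applied to each $K=K_k$, the admissibility bound above, Dyda's fractional truncation inequality $\sum_k\lvert u_k\rvert_{W^{\delta,p}(D)}^p\lesssim\lvert u\rvert_{W^{\delta,p}(D)}^p$, and the embedding $\ell^p\hookrightarrow\ell^q$, yields $(A)$.

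For $(A)\Rightarrow(B)$, I would localize a near-extremal potential for $\mathrm{cap}_{\delta,p}(K,D)$ into the Whitney cubes. Fix compact $K\subset D$ and pick $u\in C_0(D)$ with $u\ge 1$ on $K$ and $\lvert u\rvert_{W^{\delta,p}(D)}^p\le 2\,\mathrm{cap}_{\delta,p}(K,D)$. For each Whitney cube $Q$, choose a cutoff $\eta_Q\in C_c^\infty(cQ)$ (for a fixed $c>1$ with $cQ\subset D$) with $\eta_Q\equiv 1$ on $Q$ and $\lVert\nabla\eta_Q\rVert_\infty\lesssim 1/\ell(Q)$, and set $v_Q:=u\eta_Q\in C_0(D)$. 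Since $v_Q\ge 1$ on $K\cap Q$, one has $\mathrm{cap}_{\delta,p}(K\cap Q,D)\le\lvert v_Q\rvert_{W^{\delta,p}(D)}^p$. A fractional Leibniz estimate, the finite overlap of $\{cQ\}_Q$, and the standard Whitney-sum identity
\[
\sum_{Q\in\mathcal{W}(D)}\int_D\frac{\lvert\eta_Q(x)-\eta_Q(y)\rvert^p}{\lvert x-y\rvert^{n+\delta p}}\,dy\lesssim\dist(x,\partial D)^{-\delta p}
\]
produce
\[
\sum_{Q\in\mathcal{W}(D)}\lvert v_Q\rvert_{W^{\delta,p}(D)}^p\lesssim\lvert u\rvert_{W^{\delta,p}(D)}^p+\int_D\lvert u(x)\rvert^p\dist(x,\partial D)^{-\delta p}\,dx,
\]
and the $(\delta,p,p)$-Hardy cross term is absorbed by the Hardy assumption (directly when $q=p$; by a reduction argument via Theorem~\ref{t.main_emb} when $q>p$). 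Since $q/p\ge 1$ implies $(\sum_Q a_Q^{q/p})^{p/q}\le\sum_Q a_Q$ for nonnegative $a_Q$, one concludes
\[
\Bigl(\sum_{Q\in\mathcal{W}(D)}\mathrm{cap}_{\delta,p}(K\cap Q,D)^{q/p}\Bigr)^{p/q}\le\sum_Q\lvert v_Q\rvert_{W^{\delta,p}(D)}^p\lesssim\mathrm{cap}_{\delta,p}(K,D),
\]
which is $(B)$.

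The hard part is the $(A)\Rightarrow(B)$ direction, specifically the control of the $(\delta,p,p)$-Hardy cross term arising from the fractional Leibniz decomposition: for $q=p$ this is exactly the hypothesis $(A)$, but when $q>p$ one needs a separate argument exploiting the Sobolev inequality of Theorem~\ref{t.main_emb} and the level-set structure of the near-extremal potential $u$ to leverage the given $(\delta,q,p)$-Hardy inequality in place of the missing $(\delta,p,p)$-Hardy one.
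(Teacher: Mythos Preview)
Your $(B)\Rightarrow(A)$ argument is fine and essentially matches the paper's: the cubewise estimate $\ell(Q)^{-\alpha}\lvert E\rvert\lesssim\mathrm{cap}_{\delta,p}(E,D)^{q/p}$ via Theorem~\ref{t.main_emb} is exactly what the paper uses, and the level-set truncation you describe is just an unrolling of the Maz'ya characterization (Theorem~\ref{t.maz'ya}) that the paper invokes as a black box.

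The gap is in $(A)\Rightarrow(B)$, precisely at the point you flag as ``the hard part''. By passing immediately from $\bigl(\sum_Q a_Q^{q/p}\bigr)^{p/q}$ to $\sum_Q a_Q$ you reduce to bounding the global $(\delta,p,p)$-Hardy integral $\int_D\lvert u\rvert^p\dist(x,\partial D)^{-\delta p}\,dx$, and this cannot be recovered from the $(\delta,q,p)$-Hardy hypothesis by any global H\"older or Sobolev interpolation: the natural H\"older pairing with exponents $(q/p,q/(q-p))$ produces the factor $\int_D\dist(x,\partial D)^{-n}\,dx$, which diverges on an unbounded domain. Your proposed ``reduction argument via Theorem~\ref{t.main_emb} and the level-set structure'' does not get around this, and you have not indicated any concrete mechanism that would.

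The paper avoids this difficulty entirely by \emph{not} dropping to the $\ell^1$ sum. It keeps the $q/p$-powers and applies H\"older \emph{cubewise}: on each $\widehat Q$ one has $\dist(\cdot,\partial D)\approx\ell(Q)$, so
\[
\bigg(\int_{\widehat Q}\frac{\lvert u(x)\rvert^p}{\dist(x,\partial D)^{\delta p}}\,dx\bigg)^{q/p}
\le C\int_{\widehat Q}\frac{\lvert u(x)\rvert^q}{\dist(x,\partial D)^{q(\delta+n(1/q-1/p))}}\,dx,
\]
because the residual factor $\bigl(\int_{\widehat Q}\dist^{-n}\bigr)^{(q-p)/q}$ is now uniformly bounded. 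Summing over $Q$ with bounded overlap gives exactly the $(\delta,q,p)$-Hardy integral, and hypothesis~(A) applies directly. So the cross term is not hard at all once you postpone the $\ell^{q/p}\hookrightarrow\ell^1$ step; the ``separate argument'' you were seeking is unnecessary.
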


The proof  of Theorem \ref{t.hardy_c} is based on the fractional Sobolev inequalities and  the Maz'ya type characterization 
for the validity
of a fractional $(\delta,q,p)$-Hardy inequality, 
Theorem \ref{t.maz'ya}.
Before the proof of Theorem \ref{t.hardy_c} 
we give some remarks, corollaries and auxiliary results.

\begin{rem}
There exist unbounded John domains where a fractional $(\delta,p,p)$-Hardy inequality fails
for some values of $\delta$ and $p$. As an example  let us define
$D=\R^2\setminus L$, where $L$ is a closed line-segment in $\R^2$. Then,  the fractional
$(\delta,p,p)$-Hardy inequality fails whenever $1< p<\infty$ and $\delta = 1/p$.
This example is a modification of  \cite[Theorem 9]{Dyda2}.
\end{rem}

Sufficient geometric conditions for the fractional 
Hardy inequalities are given in the following corollary. For the relevant notation we refer to Section \ref{s.notation}.

\begin{cor}\label{t.cor}
Let $D$ be an unbounded $c$-John domain in $\R^n$, $D\not=\R^n$.
Let $0<\delta<1$ and $1\le p,q<\infty$ be given such that $p<n/\delta$ and  
$0\le 1/p-1/q\le \delta/n$.
Then the fractional $(\delta,q,p)$-Hardy inequality \eqref{e.hardy}  holds in $D$ if either condition (A) or condition (B) holds.
\begin{itemize}
\item[(A)] $\overline{\mathrm{dim}}_A(\partial D) < n-\delta p$;
\item[(B)]  $\underline{\mathrm{dim}}_A(\partial D)> n-\delta p$ and $\partial D$ is unbounded.
\end{itemize}
\end{cor}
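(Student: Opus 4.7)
The plan is to reduce the corollary to the diagonal case $q=p$ of Theorem \ref{t.hardy_c} and then invoke known Assouad-type sufficient conditions for fractional $(\delta,p,p)$-Hardy inequalities. By Theorem \ref{t.hardy_c} it suffices to verify the capacity summation condition
\[
\bigg(\sum_{Q\in\mathcal{W}(D)} \mathrm{cap}_{\delta,p}(K\cap Q, D)^{q/p}\bigg)^{p/q} \le N\cdot \mathrm{cap}_{\delta,p}(K,D)
\]
for every compact $K\subset D$ with a constant $N$ independent of $K$. Writing $c_Q = \mathrm{cap}_{\delta,p}(K\cap Q,D)$ and $\mathcal{K} = \mathrm{cap}_{\delta,p}(K,D)$, the hypothesis $1/p-1/q\ge 0$ gives $q/p\ge 1$ while monotonicity of capacity gives $c_Q\le\mathcal{K}$. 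Hence
\[
\sum_Q c_Q^{q/p} = \sum_Q c_Q\cdot c_Q^{q/p-1}\le\mathcal{K}^{q/p-1}\sum_Q c_Q,
\]
so it is enough to establish the diagonal estimate $\sum_Q \mathrm{cap}_{\delta,p}(K\cap Q, D)\le N\cdot\mathrm{cap}_{\delta,p}(K,D)$. By Theorem \ref{t.hardy_c} applied at the exponent $q=p$, this is in turn equivalent to the fractional $(\delta,p,p)$-Hardy inequality holding in $D$.

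Under (A), where $\overline{\mathrm{dim}}_A(\partial D)<n-\delta p$, the $(\delta,p,p)$-Hardy inequality is valid in \emph{every} open set $G\subsetneq\R^n$ satisfying this upper Assouad bound on its boundary, with no John hypothesis required. This known result is proved via a pointwise fractional Hardy estimate built from a dyadic decomposition around each boundary point, the small upper Assouad dimension furnishing the relevant covering bound at every scale; see \cite{EH-SV}. Under (B), the hypothesis $\underline{\mathrm{dim}}_A(\partial D)>n-\delta p$ yields, for some constant $c_0>0$ and for every $w\in\partial D$ and every $0<r<2\mathrm{diam}(\partial D)$, a uniform capacity density estimate of the form
\[
\mathrm{cap}_{\delta,p}\bigl(\overline{B^n(w,r)}\cap\partial D,\, B^n(w,2r)\bigr)\ge c_0\, r^{n-\delta p}.
\]
Since $\partial D$ is unbounded, this density estimate is available at all scales $r>0$, and via the standard Maz'ya-type capacity argument it implies the $(\delta,p,p)$-Hardy inequality in $D$.

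The decisive technical point is the interplay between the unboundedness of $\partial D$ and the Whitney cubes of the unbounded John domain in case (B). The lower Assouad condition on $\partial D$ constrains covers only at radii below $2\mathrm{diam}(\partial D)$, and Whitney cubes in an unbounded John domain can have arbitrarily large side length; the boundary must therefore have infinite diameter for the density estimate, and hence the ensuing capacity summation, to persist at every relevant scale. Case (A), by contrast, is scale-free and requires only $D\neq\R^n$; the unbounded analogue of the known bounded-domain sufficient condition transports without modification because the underlying pointwise estimate is purely local to $\partial D$.
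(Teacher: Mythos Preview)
Your overall strategy matches the paper's: reduce to the diagonal case $q=p$ via Theorem \ref{t.hardy_c}, then invoke a known sufficient condition for the $(\delta,p,p)$-Hardy inequality. Your reduction step, using monotonicity $c_Q\le\mathcal{K}$ and $q/p\ge 1$ to get $\sum_Q c_Q^{q/p}\le \mathcal{K}^{q/p-1}\sum_Q c_Q$, is correct and in fact makes explicit what the paper leaves as a one-line assertion (``it is enough to prove a $(\delta,p,p)$-Hardy inequality'').

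Where your write-up drifts from the paper is in the black-box step. The paper cites \cite[Theorem~2]{Dyda1} for both (A) and (B) and notes one essential point: that result requires a plumpness condition on $D$, and plumpness is supplied by the John condition of Definition~\ref{sjohn}. Your sketch of case (B) via a boundary capacity density estimate and ``the standard Maz'ya-type argument'' omits precisely this ingredient; without some interior thickness of $D$ that argument does not close. Also, your citation \cite{EH-SV} for case (A) is misplaced: that paper treats domains with \emph{uniformly fat complement}, which is the large-lower-Assouad situation of (B), not the small-upper-Assouad situation of (A). Swap in the reference \cite{Dyda1} and record the John-implies-plumpness remark, and your proof lines up with the paper's.
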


\begin{proof}
By 
Theorem \ref{t.hardy_c}
it is enough to prove a $(\delta,p,p)$-Hardy inequality which is a consequence of  \cite[Theorem 2]{Dyda1}.
The plumpness condition required there  
follows from the John condition in Definition \ref{sjohn}.
\end{proof}

Now we start to build up our proof for Theorem
\ref{t.hardy_c}.
First we give a characterization which is an extension
of \cite[Proposition 5]{Dyda2} where the special
case of $p=q$ is considered. 
This type of characterizations go back to Vl. Maz'ya,
\cite{Maz}.

\begin{thm}\label{p.maz'ya}
Suppose that $G$ is an open set 
in $\R^n$ and $\omega:G\to [0,\infty)$ is  measurable. Then the
following conditions are equivalent 
whenever $0<\delta<1$ and $0<p\le q<\infty$.
\begin{itemize}
\item[(A)] There is a constant $C_1>0$ such that 
the inequality
\[\int_G \lvert u(x)\rvert^q\,\omega(x)\,dx \le C_1 \lvert u\rvert_{W^{\delta,p}(G)}^q\]
holds for every $u\in C_0(G)$.
\item[(B)] There is a constant $C_2>0$ such that
the inequality
\[
\int_K \omega(x)\,dx \le C_2\, \mathrm{cap}_{\delta,p}(K,G)^{q/p}\
\]
holds
for every compact set $K$ in $G$.
\end{itemize}
In the implication from (A) to (B)  $C_2=C_1$ and from (B) to (A) 
$C_1 = \frac{C_2 2^{3q+2q/p}}{ (1-2^{-p})^{q/p}}$. 
\end{thm}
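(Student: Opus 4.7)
The implication (A) $\Rightarrow$ (B) is immediate with $C_2 = C_1$: given a compact set $K$ in $G$ and any $u \in C_0(G)$ with $u \ge 1$ on $K$, (A) yields $\int_K \omega\,dx \le \int_G |u|^q\omega\,dx \le C_1 |u|_{W^{\delta,p}(G)}^q$, and taking the infimum over all admissible $u$ gives (B). The substance of the theorem is therefore (B) $\Rightarrow$ (A), for which the plan is the standard Maz'ya truncation argument at dyadic levels, adapted to the fractional setting.

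Fix $u \in C_0(G)$. Since $|u|\in C_0(G)$ and $\lvert |u(x)|-|u(y)|\rvert \le |u(x)-u(y)|$, one may assume $u\ge 0$. Introduce the dyadic level sets $E_k = \{x\in G : u(x) > 2^k\}$, $k \in \mathbb{Z}$; each $\overline{E_k}$ is compact in $G$ because $u \in C_0(G)$. Define the truncations
\[
v_k(x) = \min\bigl\{\max\{u(x)-2^{k-1},\,0\},\,2^{k-1}\bigr\} \in C_0(G),
\]
and observe that $v_k/2^{k-1}\ge 1$ on $\overline{E_k}$, so
\[
\mathrm{cap}_{\delta,p}(\overline{E_k},G)\le 2^{-(k-1)p}\,|v_k|_{W^{\delta,p}(G)}^p.
\]
Starting from the layer-cake estimate $\int_G u^q\omega\,dx \le (2^q-1)\sum_{k} 2^{kq}\,\omega(E_k)$ (obtained by Abel summation from the bound $\omega(E_k\setminus E_{k+1})$ on each dyadic shell), then applying (B) to each $\overline{E_k}$ followed by the displayed capacity bound, yields
\[
\int_G u^q\omega\,dx \le C_2\,(2^q-1)\sum_k 2^{kq}\,\mathrm{cap}_{\delta,p}(\overline{E_k},G)^{q/p} \le C_2\,2^{2q}\sum_k |v_k|_{W^{\delta,p}(G)}^q.
\]

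The core remaining step, and the main technical obstacle, is the pointwise Maz'ya truncation inequality
\[
\sum_{k\in\mathbb{Z}}|v_k(x)-v_k(y)|^p \le \frac{1}{1-2^{-p}}\,|u(x)-u(y)|^p\qquad (x,y\in G).
\]
For $p\ge 1$ this follows with constant $1$ from the telescoping identity $\sum_k|v_k(x)-v_k(y)|=|u(x)-u(y)|$ combined with $\sum a_k^p\le(\sum a_k)^p$ for nonnegative sequences; for $0<p<1$ one splits the sum at the index where $2^{k-1}$ crosses $|u(x)-u(y)|$ and bounds the two halves by geometric series, producing the factor $(1-2^{-p})^{-1}$. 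Integrating the pointwise bound against $|x-y|^{-n-\delta p}$ yields $\sum_k|v_k|^p_{W^{\delta,p}(G)}\le(1-2^{-p})^{-1}|u|^p_{W^{\delta,p}(G)}$, and because $q\ge p$ the embedding $\ell^p\hookrightarrow \ell^q$ gives $\sum_k|v_k|^q_{W^{\delta,p}(G)}\le \bigl(\sum_k|v_k|^p_{W^{\delta,p}(G)}\bigr)^{q/p}$. Combining the three displayed estimates delivers (A) with $C_1$ of the stated form, once all numerical factors are collected.
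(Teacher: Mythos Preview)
Your argument is correct and follows the same Maz'ya dyadic-truncation strategy as the paper, but you package the central estimate differently. The paper bounds $\sum_k 2^{kq}\lvert u_k\rvert_{W^{\delta,p}(G)}^q$ by decomposing $G\times G$ into products of dyadic shells $A_i\times A_j$ (and $F\times A_j$), using the pairwise bound $\lvert u_k(x)-u_k(y)\rvert\le 2\cdot 2^{-j}\lvert u(x)-u(y)\rvert$ for $i\le k\le j$, and then summing a geometric series in $k$. You instead invoke the single pointwise inequality $\sum_k\lvert v_k(x)-v_k(y)\rvert^p\le C(p)\lvert u(x)-u(y)\rvert^p$, integrate once, and finish with $\ell^p\hookrightarrow\ell^q$. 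Your route is shorter and in fact yields a smaller numerical constant than the paper's $2^{3q+2q/p}(1-2^{-p})^{-q/p}$; the telescoping identity $\sum_k\lvert v_k(x)-v_k(y)\rvert=\lvert u(x)-u(y)\rvert$ (valid because each $\phi_k(t)=\min\{\max\{t-2^{k-1},0\},2^{k-1}\}$ is nondecreasing and $\sum_k\phi_k(t)=t$) makes the $p\ge 1$ case immediate.

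One small point: for $0<p<1$ your sketch says both halves of the split sum are controlled by geometric series, but the ``large $k$'' half (where $2^{k-1}\ge\lvert u(x)-u(y)\rvert$) is not geometric---rather, at most two of those terms are nonzero, since an interval of length $d$ meets at most two dyadic blocks $[2^{k-1},2^k]$ of length exceeding $d$. This gives a constant like $(1-2^{-p})^{-1}+2$ rather than exactly $(1-2^{-p})^{-1}$, which is harmless for the theorem but means your final $C_1$ does not literally match the one stated; it is in any case no larger.
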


As a corollary of Theorem \ref{p.maz'ya}
we obtain
Theorem \ref{t.maz'ya} when we choose  
\[
\omega=\mathrm{dist}(\cdot,\partial G)^{-q(\delta+n(1/q-1/p))}\,.
\]

\begin{thm}\label{t.maz'ya}
Let $0<\delta<1$ and $0<p\le q<\infty$. Then a $(\delta,q,p)$-Hardy inequality 
\eqref{e.hardy}  holds in a proper open set $G$ in $\R^n$ if and only if
there is a constant $C>0$ such that the inequality
\begin{equation}\label{e.maz'ya}
\int_K \mathrm{dist}(x,\partial G)^{-q(\delta+n(1/q-1/p))}\,dx \le C\,\mathrm{cap}_{\delta,p}(K,G)^{q/p}
\end{equation}
holds for every compact set $K$ in $G$.
\end{thm}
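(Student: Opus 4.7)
The plan is to derive Theorem \ref{t.maz'ya} as an immediate specialization of Theorem \ref{p.maz'ya}. The weight to use is the one already advertised in the excerpt, namely
\[
\omega(x) = \dist(x,\partial G)^{-q(\delta+n(1/q-1/p))}, \qquad x\in G,
\]
which is non-negative and continuous on $G$, hence measurable, so the hypotheses of Theorem \ref{p.maz'ya} are in place (we are given $0<\delta<1$ and $0<p\le q<\infty$).

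Next I would observe that, with this particular choice of $\omega$, the two conditions in Theorem \ref{p.maz'ya} read verbatim as the two conditions we want to identify. Specifically, condition (A) of Theorem \ref{p.maz'ya} becomes
\[
\int_G \lvert u(x)\rvert^q \dist(x,\partial G)^{-q(\delta+n(1/q-1/p))}\,dx \le C_1\,\lvert u\rvert_{W^{\delta,p}(G)}^q
\qquad \text{for all } u\in C_0(G),
\]
which is exactly the $(\delta,q,p)$-Hardy inequality \eqref{e.hardy} once one takes both sides to the power $q$ inside the $|u|_{W^{\delta,p}(G)}$ notation, and condition (B) of Theorem \ref{p.maz'ya} becomes
\[
\int_K \dist(x,\partial G)^{-q(\delta+n(1/q-1/p))}\,dx \le C_2\,\mathrm{cap}_{\delta,p}(K,G)^{q/p}
\qquad \text{for every compact } K\subset G,
\]
which is inequality \eqref{e.maz'ya}.

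Applying the equivalence in Theorem \ref{p.maz'ya} then yields the desired equivalence in Theorem \ref{t.maz'ya}. There is no genuine obstacle at this stage; the entire analytic content has been packaged into Theorem \ref{p.maz'ya} (which generalizes the case $p=q$ from \cite[Proposition 5]{Dyda2} via the Maz'ya capacity-measure method), and the present statement is the specialization to the distance-power weight. If desired one could also record the explicit quantitative dependence of the Hardy constant $C$ in \eqref{e.hardy} on the capacity constant in \eqref{e.maz'ya} by tracing through the constants furnished by Theorem \ref{p.maz'ya}, namely $C = C_2\cdot 2^{3q+2q/p}(1-2^{-p})^{-q/p}$ in the sufficient direction and $C_2 = C$ in the necessary direction.
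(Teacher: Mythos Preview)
Your proposal is correct and follows exactly the paper's own argument: the paper simply states that Theorem~\ref{t.maz'ya} is the specialization of Theorem~\ref{p.maz'ya} to the weight $\omega=\dist(\cdot,\partial G)^{-q(\delta+n(1/q-1/p))}$. Your additional remarks on measurability of $\omega$ and the explicit constants are accurate elaborations of this one-line reduction.
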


The proof for Theorem \ref{p.maz'ya} is a 
simple modification of the proof of \cite[Proposition 5]{Dyda2},
but we give the proof in the general case 
for the convenience of the reader.

\begin{proof}[Proof of Theorem \ref{p.maz'ya}]
Let us first assume that condition (A) holds.
Let $u\in C_0(G)$ be such that $u(x)\geq 1$ for every point $x\in K$.
By  condition (A) we obtain
\[
 \int_K \omega(x)\,dx \leq \int_G \lvert u(x)\rvert^q\,\omega(x)\,dx
\leq C_1 \bigg(\int_G
 \int_G \frac{\lvert u(x)-u(y)\rvert^p}{\lvert x-y\rvert^{n+\delta p}}\, dy\, dx\bigg)^{q/p}\,.
\]
Taking infimum over all such functions $u$ we obtain condition (B) with $C_2=C_1$.

Now let us assume that condition (B) holds and let $u\in C_0(G)$. We write
\[
 E_k = \{x\in G \,:\, \lvert u(x)\rvert > 2^k \} \quad \text{and} \quad A_k = E_k \setminus E_{k+1}\,, k\in \Z\,.
\]
Let us note that
\begin{equation}\label{e.decomp}
G= \{x\in G\,:\, 0\le \lvert u(x)\rvert <\infty\} =\{x\in G \,:\, u(x)=0\}\cup \bigcup_{i\in \Z} A_i\,.
\end{equation}
By condition (B) 
\begin{equation*}
\begin{split}
 \int_G \lvert u(x)\rvert^q \omega(x)\,dx
&\leq
 \sum_{k\in \Z} 2^{(k+2)q} \int_{A_{k+1}}  \omega(x)\,dx 
\\&\leq
C_2 2^{2q} \sum_{k\in \Z} 2^{kq} \mathrm{cap}_{\delta ,p}(\overline{A}_{k+1}, G)^{q/p}  \,.
\end{split}
\end{equation*}
Let us define $u_k:G\to [0,1]$ by
\[
u_k(x) = \begin{cases}
1, & \text{if $\lvert u(x)\rvert \geq 2^{k+1}$\,,}\\
\frac{\lvert u(x)\rvert}{2^k}-1 &\text{if $2^k < \lvert u(x)\rvert < 2^{k+1}$\,,}\\
0, & \text{if $\lvert u(x)\rvert \leq 2^k$\,.}
\end{cases}
\]
Then $u_k \in C_0(G)$ and $u_k(x)=1$ if $x\in \overline{E}_{k+1}$. We note that $\overline{A}_{k+1}
\subset\overline{E}_{k+1}$.
Thus we may take $u_k$ as a test function for the capacity. 
Let us write $F=\{x\in G \,:\, u(x)=0\}$. By \eqref{e.decomp},
\begin{align*}
&\mathrm{cap}_{\delta ,p}(\overline{A}_{k+1}, G) \leq 
\int_G \int_G \frac{\lvert u_k(x)-u_k(y)\rvert^p}{\lvert x-y\rvert^{n+\delta p}}\,dy\,dx \nonumber\\
&\leq
 2 \sum_{i\leq k} \sum_{j\geq k} \int_{A_i} \int_{A_j} \frac{\lvert u_k(x)-u_k(y)\rvert ^p}{\lvert x-y\rvert^{n+\delta p}}\,dy\,dx
+ 2 \sum_{j\geq k} \int_{F} \int_{A_j} \frac{\lvert u_k(x)-u_k(y)\rvert ^p}{\lvert x-y\rvert^{n+\delta p}}\,dy\,dx\,.
\end{align*}
The inequality
\begin{equation*}
\lvert u_k(x)-u_k(y)\rvert \leq 2\cdot 2^{-j} \lvert u(x)-u(y)\rvert
\end{equation*}
holds whenever $(x,y)\in  A_i\times A_j$ and $i\leq k \leq j$.
Namely: $\lvert u_k(x)-u_k(y)\rvert \leq 2^{-k} \lvert u(x)-u(y)\rvert$ if $x,y\in G$. If $x\in A_i$ and $y\in A_j$
with $i+2\leq j$, then
$ \lvert u(x)-u(y)\rvert \geq \lvert u(y)\rvert -\lvert u(x)\rvert \geq 2^{j-1}$.
Hence $\lvert u_k(x) - u_k(y)\rvert \leq 1 \leq 2\cdot 2^{-j} \lvert u(x)-u(y)\rvert$.
\noindent
Thus, since $q\ge p$,
\begin{align*}
\sum_{k\in\Z}  2^{kq}\bigg( \sum_{i\leq k} &\sum_{j\geq k} \int_{A_i} \int_{A_j} \frac{\lvert u_k(x)-u_k(y)\rvert ^p}{\lvert x-y\rvert^{n+\delta p}}\,dy\,dx\bigg)^{q/p} \\
&\le 2^q\bigg( \sum_{k\in\Z}  \sum_{i\leq k} \sum_{j\geq k} 2^{(k-j)p} \int_{A_i} \int_{A_j} \frac{\lvert u(x)-u(y)\rvert ^p}{\lvert x-y\rvert^{n+\delta p}}\,dy\,dx\bigg)\,.
\end{align*}
By proceeding in a similar way as before we obtain that
\begin{align*}
\sum_{k\in\Z}  2^{kq}\bigg( &\sum_{j\geq k} \int_{F} \int_{A_j} \frac{\lvert u_k(x)-u_k(y)\rvert ^p}{\lvert x-y\rvert^{n+\delta p}}\,dy\,dx\bigg)^{q/p} \\
&\le 2^q\bigg( \sum_{k\in\Z}  \sum_{j\geq k} 2^{(k-j)p} \int_{F} \int_{A_j} \frac{\lvert u(x)-u(y)\rvert ^p}{\lvert x-y\rvert^{n+\delta p}}\,dy\,dx\bigg)^{q/p}\,.
\end{align*}

Using the sum of the geometric series $\sum_{k=i}^j 2^{(k-j)p} < \sum_{k=-\infty}^j 2^{(k-j)p} \le  \frac{1}{1-2^{-p}}$ and changing the order of summations gives
\[
\int_G \lvert u(x)\rvert^q\omega(x)\,dx
 \leq
\frac{C_2 2^{3q+2q/p}}{(1-2^{-p})^{q/p}}
\bigg(\int_G \int_G \frac{\lvert u(x)-u(y)\rvert^p}{\lvert x-y\rvert^{n+\delta p}}\, dy\, dx\bigg)^{q/p}\,.
\]
Thus condition (A) is true with $C_1 = C_2 2^{3q+2q/p} (1-2^{-p})^{-q/p}$. 
\end{proof}

The following lemma is an extension of \cite[Proposition 6]{Dyda2}.

\begin{lem}\label{p.necessary}
Let $0<\delta <1$ and 
$0< p\le q<\infty$ be given.
Suppose that the  fractional $(\delta ,q,p)$-Hardy inequality \eqref{e.hardy} holds in a proper open set $G$ in $\R^n$
with a constant $C_1>0$. Then  there is a constant $N=N(C_1,n,\delta,q,p)>0$ such that the inequality
\begin{equation}\label{e.wanted}
\sum_{Q\in\mathcal{W}(G)} \mathrm{cap}_{\delta ,p}(K\cap Q,G)^{q/p}\le N^{q/p}\, \mathrm{cap}_{\delta,p}(K,G)^{q/p}
\end{equation}
holds for every compact set $K$ in $G$.
\end{lem}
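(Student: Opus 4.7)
The plan is to construct localized test functions from an admissible function for $\mathrm{cap}_{\delta,p}(K,G)$ and then use the $(\delta,q,p)$-Hardy inequality to control the resulting sum. Given $u\in C_0(G)$ with $u\ge 1$ on $K$, for each $Q\in\mathcal{W}(G)$ pick a smooth cutoff $\eta_Q$ with $\eta_Q\equiv 1$ on $Q$, $\mathrm{supp}\,\eta_Q\subset\tfrac{9}{8}Q\subset G$, $0\le\eta_Q\le 1$, and $|\nabla\eta_Q|\le C/\ell(Q)$, and set $u_Q:=u\eta_Q$. Then $u_Q\in C_0(G)$ satisfies $u_Q\ge 1$ on $K\cap Q$, so $\mathrm{cap}_{\delta,p}(K\cap Q,G)^{q/p}\le|u_Q|^q_{W^{\delta,p}(G)}$. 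The product identity $u_Q(x)-u_Q(y)=\eta_Q(x)(u(x)-u(y))+u(y)(\eta_Q(x)-\eta_Q(y))$ combined with $(a+b)^p\le 2^{p-1}(a^p+b^p)$ yields $|u_Q|^p_{W^{\delta,p}(G)}\le C(I_Q+II_Q)$ where
\[
I_Q=\iint_{G\times G}\frac{\eta_Q(x)^p|u(x)-u(y)|^p}{|x-y|^{n+\delta p}}\,dy\,dx,\qquad II_Q=\int_G u(y)^p g_Q(y)\,dy,
\]
with $g_Q(y)=\int_G|\eta_Q(x)-\eta_Q(y)|^p/|x-y|^{n+\delta p}\,dx$.

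Raising to the power $q/p$ and summing, $(a+b)^{q/p}\le 2^{q/p-1}(a^{q/p}+b^{q/p})$ shows that it suffices to bound $\sum_Q I_Q^{q/p}$ and $\sum_Q II_Q^{q/p}$. For the former, the elementary inequality $\sum_Q a_Q^{q/p}\le(\sum_Q a_Q)^{q/p}$ (valid for nonnegative reals since $q/p\ge 1$) combined with $\sum_Q\eta_Q(x)^p\le C(n)$, which follows from the bounded overlap of $\{\tfrac{9}{8}Q\}$, yields $\sum_Q I_Q\le C|u|^p_{W^{\delta,p}(G)}$, whence $\sum_Q I_Q^{q/p}\le C|u|^q_{W^{\delta,p}(G)}$. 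For the latter we cannot afford the analogous reduction to $\sum_Q II_Q$ since only the $(\delta,q,p)$-Hardy inequality is available, not the $(\delta,p,p)$-Hardy inequality; instead, I would apply H\"older's inequality with exponents $(q/p,q/(q-p))$ to $II_Q=\int u^p g_Q\,dy$, obtaining
\[
II_Q^{q/p}\le\bigg(\int_G u(y)^q g_Q(y)\,dy\bigg)\bigg(\int_G g_Q(y)\,dy\bigg)^{(q-p)/p},
\]
and use $\int_G g_Q\,dy=|\eta_Q|^p_{W^{\delta,p}(G)}\le C\ell(Q)^{n-\delta p}$ to sum to
\[
\sum_Q II_Q^{q/p}\le C\int_G u(y)^q\bigg(\sum_Q\ell(Q)^{(n-\delta p)(q-p)/p}g_Q(y)\bigg)\,dy.
\]

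The crucial remaining step, which I expect to be the main obstacle, is the pointwise Whitney estimate
\[
\sum_Q\ell(Q)^{(n-\delta p)(q-p)/p}g_Q(y)\le C\dist(y,\partial G)^{-q(\delta+n(1/q-1/p))},\qquad y\in G,
\]
after which the $(\delta,q,p)$-Hardy inequality immediately gives $\sum_Q II_Q^{q/p}\le CC_1|u|^q_{W^{\delta,p}(G)}$, and taking the infimum over admissible $u$ yields the conclusion with $N=N(C_1,n,\delta,q,p)$. I would prove this pointwise estimate by splitting the sum into a \emph{near} part over cubes with $y\in 3Q$ (finitely many, each satisfying $\ell(Q)\sim\dist(y,\partial G)$ and $g_Q(y)\lesssim\ell(Q)^{-\delta p}$ from the Lipschitz bound on $\eta_Q$, producing the correct exponent via the algebraic identity $(n-\delta p)(q-p)/p-\delta p=-q(\delta+n(1/q-1/p))$) and a \emph{tail} part over cubes with $y\notin 3Q$ (using $g_Q(y)\lesssim\ell(Q)^n/|y-c_Q|^{n+\delta p}$ together with the Whitney constraint $\ell(Q)\le C\dist(Q,\partial G)\le C(|y-c_Q|+\dist(y,\partial G))$ and a layer-cake count of Whitney cubes grouped by scale and distance to $y$).
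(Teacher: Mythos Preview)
Your reduction to $I_Q$ and $II_Q$ is clean, and $\sum_Q I_Q^{q/p}$ is handled correctly. The gap is in the ``crucial remaining step'': the pointwise Whitney estimate
\[
\sum_{Q\in\mathcal{W}(G)}\ell(Q)^{(n-\delta p)(q-p)/p}g_Q(y)\le C\dist(y,\partial G)^{-q(\delta+n(1/q-1/p))}
\]
is simply \emph{false} once $q\ge np/(n-\delta p)$, even though the lemma is stated (and needed, at the endpoint) for all $0<p\le q<\infty$. Take $G=\R^n_+$, $q=np/(n-\delta p)$, and $y$ at height $1$. The right side equals a constant. For each large $m$, the Whitney cube $Q_m$ sitting directly above $y$ at height $\sim 2^m$ has $\ell(Q_m)\sim |y-c_{Q_m}|\sim 2^m$ and $y\notin 3Q_m$; your own tail bound gives $g_{Q_m}(y)\sim\ell(Q_m)^n/|y-c_{Q_m}|^{n+\delta p}\sim 2^{-m\delta p}$, while $\ell(Q_m)^{(n-\delta p)(q-p)/p}=\ell(Q_m)^{\delta p}\sim 2^{m\delta p}$, so each term contributes $\sim 1$ and the sum diverges. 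Since the fractional Sobolev inequality (which \emph{is} the $(\delta,q,p)$-Hardy inequality at this endpoint) holds in $\R^n_+$, this is a genuine counterexample within the scope of the lemma. More generally, writing $\alpha=(n-\delta p)(q-p)/p$, your tail sum behaves like $\int_{\{|y-z|\gtrsim\dist(z,\partial G)\}}\dist(z,\partial G)^\alpha|y-z|^{-n-\delta p}\,dz$, which converges only when $\alpha<\delta p$, i.e.\ $q<np/(n-\delta p)$.

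The paper avoids this by localising \emph{before} applying the product rule. Since $u_Q$ is supported in $\widehat Q=\tfrac{17}{16}Q$, one splits
\[
|u_Q|_{W^{\delta,p}(G)}^p \le C\bigg(\int_{\widehat Q}\frac{|u_Q(x)|^p}{\dist(x,\partial G)^{\delta p}}\,dx+\int_{Q^*}\int_{Q^*}\frac{|u_Q(x)-u_Q(y)|^p}{|x-y|^{n+\delta p}}\,dy\,dx\bigg),
\]
the first term arising because for $x\in\widehat Q$ and $y\notin Q^*$ one has $u_Q(y)=0$ and $\int_{G\setminus Q^*}|x-y|^{-n-\delta p}\,dy\sim\ell(Q)^{-\delta p}\sim\dist(x,\partial G)^{-\delta p}$. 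Now \emph{both} pieces are integrals over $\widehat Q$ or $Q^*\times Q^*$; raising to $q/p$, applying H\"older on $\widehat Q$, and summing via the bounded overlap $\sum_Q\chi_{\widehat Q}\le C$ gives directly $\sum_Q(\cdots)^{q/p}\le C\int_G|u|^q\dist(\cdot,\partial G)^{-q(\delta+n(1/q-1/p))}$, after which the assumed Hardy inequality closes the argument for every $q\ge p$. The essential difference is that the global tail of $g_Q$ never appears.
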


\begin{proof}
If $Q\in\mathcal{W}(G)$ we write $\widehat{Q}=\tfrac {17}{16}Q$ and  $Q^*= \tfrac{9}{8}Q$. We recall that the side lengths of these cubes are comparable to their distances
from  $\partial G$.
 
Let us fix a compact set $K$ in $G$ and $u\in C_0(G)$ 
such that $u(x)\ge 1$ for each $x\in K$.
For every $Q\in\mathcal{W}(G)$ we 
let $\varphi_Q$ be a smooth function such that $\lvert \nabla \varphi_Q\rvert \le C\ell(Q)^{-1}$ and
$\chi_{Q}\le \varphi_Q\le \chi_{\widehat{Q}}$. Then, $u_Q:= u\varphi_Q$ is an admissible test function for $\mathrm{cap}_{\delta ,p}(K\cap Q,G)$. 
Hence, we can estimate the left hand side of inequality \eqref{e.wanted} by
\begin{align*}
&\sum_{Q\in\mathcal{W}(G)}
\bigg(\int_G \int_G \frac{\lvert u_Q(x)-u_Q(y)\rvert^{p}}{\lvert x-y\rvert^{n+\delta p}}\,dy\,dx\bigg)^{q/p}\\
&\le C \sum_{Q\in\mathcal{W}(G)}\bigg(    
\int_{\widehat{Q}} \frac{\lvert u_Q(x)\rvert^p}{\mathrm{dist}(x,\partial G)^{\delta p}}\,dx+
\int_{Q^*} \int_{Q^*}\frac{\lvert u_Q(x)-u_Q(y)\rvert^{p}}{\lvert x-y\rvert^{n+\delta p}}\,dy\,dx
 \bigg)^{q/p}\\
 &\le C\sum_{Q\in\mathcal{W}(G)} \bigg\{  \bigg(    
\int_{\widehat{Q}} \frac{\lvert u_Q(x)\rvert^p}{\mathrm{dist}(x,\partial G)^{\delta p}}\,dx\bigg)^{q/p}
 +\bigg(\int_{Q^*} \int_{Q^*}\frac{\lvert u_Q(x)-u_Q(y)\rvert^{p}}{\lvert x-y\rvert^{n+\delta p}}\,dy\,dx
 \bigg)^{q/p} \bigg\}\,.
\end{align*}
Since $\lvert u_Q\rvert \le \lvert u\rvert$ and $\sum_{Q} \chi_{\widehat{Q}}\le C$, we may
apply 
H\"older's inequality with $(q/p,q/(q-p))$ and the
$(\delta ,q,p)$-Hardy inequality   \eqref{e.hardy} to obtain
\begin{equation}\label{e.etehd}
\begin{split}
\sum_{Q\in\mathcal{W}(G)}\bigg(\int_{\widehat{Q}} \frac{\lvert u_Q(x)\rvert^p}{\mathrm{dist}(x,\partial G)^{\delta p}}\,dx\bigg)^{q/p}
&\le C \sum_{Q\in\mathcal{W}(G)}\int_{\widehat{Q}} \frac{\lvert u(x)\rvert^q}{\mathrm{dist}(x,\partial G)^{q(\delta +n(1/q-1/p))}}\,dx\\
&\le C \bigg(\int_G\int_G \frac{\lvert u(x)-u(y)\rvert^p}{\lvert x-y\rvert^{n+\delta p}}\,dy\,dx\bigg)^{q/p}
=C \lvert u\rvert_{W^{\delta,p}(G)}^{q}\,.
\end{split}
\end{equation}
We fix $x,y\in G$ to estimate the remaining series. The following pointwise estimates will be useful,
\begin{align*}
\lvert u_Q(x)-u_Q(y)\rvert 
&\le  \lvert u(x)\rvert \lvert \varphi_Q(x)-\varphi_Q(y)\rvert + \lvert u(x)-u(y)\rvert \varphi_Q(y)\\
&\le C\cdot \ell(Q)^{-1}\cdot\lvert u(x)\rvert\cdot \lvert x-y\rvert  + \lvert u(x)-u(y)\rvert\,.
\end{align*}
Namely, since $\sum_{Q\in\mathcal{W}(G)} \chi_{Q^*}\le C\chi_G$ and $q\ge p$, we find that
\begin{align*}
 \sum_{Q\in\mathcal{W}(G)}
\bigg(\int_{Q^*} \int_{Q^*}\frac{\lvert u(x)-u(y)\rvert^{p}}{\lvert x-y\rvert^{n+\delta p}}\,dy\,dx\bigg)^{q/p}
\le C \lvert u\rvert_{W^{\delta,p}(G)}^{q}\,.
\end{align*}
By recalling
that $0<\delta <1$ and 
by estimating as in \eqref{e.etehd}
we obtain
\begin{align*}
&\sum_{Q\in\mathcal{W}(G)}
\bigg(
\ell(Q)^{-p}\int_{Q^*} \lvert u(x)\rvert ^p \int_{Q^*}\frac{\lvert x-y\rvert^p}{\lvert x-y\rvert^{n+\delta p}}\,dy\,dx\bigg)^{q/p}\\
&\le C\sum_{Q\in\mathcal{W}(G)}
\bigg(\ell(Q)^{-\delta p}\int_{Q^*} \lvert u(x)\rvert ^p \,dx\bigg)^{q/p}\\
&\le C\sum_{Q\in\mathcal{W}(G)}
\bigg(\int_{Q^*} \frac{\lvert u(x)\rvert ^p}{\mathrm{dist}(x,\partial G)^{\delta p}}\, dx\bigg)^{q/p}\le C \lvert u\rvert_{W^{\delta,p}(G)}^q\,.
\end{align*}
By collecting the estimates and taking the infimum over all admissible $u$ for $\mathrm{cap}_{\delta,p}(K,G)$
we complete the proof.
\end{proof}

Now we are able to give the proof for Theorem \ref{t.hardy_c}.

\begin{proof}[Proof of Theorem \ref{t.hardy_c}]
The implication from  (A) to (B) is a consequence of Lemma \ref{p.necessary}.
Let us then assume that condition (B) holds.
In order to have 
 inequality 
\eqref{e.hardy}
in $D$, by Theorem~\ref{t.maz'ya} it is enough
to prove that there is a constant $C=C(\delta,p,n,c,N)>0$ such that inequality
\begin{equation}\label{e.maz'ya_app}
\int_K \mathrm{dist}(x,\partial D)^{-q(\delta+n(1/q-1/p))}\,dx \le C\,\mathrm{cap}_{\delta,p}(K,D)^{q/p}
\end{equation}
holds for every compact set $K$ in $D$. 
Let us fix a compact set $K$ in $D$.
We consider a variation of inequality \eqref{e.maz'ya_app}:  
there is a constant $C=C(\delta,p,n,c)>0$ such that
the inequality
 \begin{equation}\label{e.cap_est}
\bigg( \int_{K\cap Q}\dist(x,\partial D)^{-q(\delta+n(1/q-1/p))}\,dx\bigg)^{1/q}
\le C\, \mathrm{cap}_{\delta,p}(K\cap Q,D)^{1/p}
 \end{equation}
holds for every Whitney cube  $Q\in\mathcal{W}(D)$.
To prove inequality \eqref{e.cap_est} we let $u\in C_0(D)$ be a test function such that $u(x)\ge 1$ for every $x\in K\cap Q$.
By the properties of Whitney cubes and Theorem \ref{t.main_emb}  we estimate the left hand side of 
inequality \eqref{e.cap_est} 
\begin{align*}
 C \lvert K\cap Q\rvert^{1/q}  \lvert Q\rvert^{-(\delta+n(1/q-1/p))/n} &\le C  \lvert K\cap Q\rvert^{1/q-(\delta+n(1/q-1/p))/n}\\
&\le C\lVert u\rVert_{L^{np/(n-\delta p)}(D)}
\\&\le  C\lvert u\rvert_{W^{\delta,p}(D)}\,.
\end{align*}
Inequality \eqref{e.cap_est} follows when we take the infimum over all admissible functions $u$ for the capacity  $\mathrm{cap}_{\delta,p}(K\cap Q,D)$.

We may now finish the proof by using inequality \eqref{e.cap_est} and 
condition (B) 
\begin{align*}
\int_K \mathrm{dist}(x,\partial D)^{-q(\delta+n(1/q-1/p))}\,dx  &=\sum_{Q\in\mathcal{W}(D)} 
\int_{K\cap Q} \mathrm{dist}(x,\partial D)^{-q(\delta+n(1/q-1/p))}\,dx 
\\&\le C \sum_{Q\in\mathcal{W}(D)} \mathrm{cap}_{\delta,p}(K\cap Q,D)^{q/p} \\& \le CN^{q/p} \mathrm{cap}_{\delta,p}( K,D)^{q/p}\,,
\end{align*}
where $C=C(\delta,p,n,c)$.
The proof is complete.
\end{proof}

\end{document}